\documentclass[11pt]{amsart}

\usepackage{amssymb}
\usepackage{mathtools}
\usepackage{enumitem}
\usepackage{hyperref}
\usepackage{tikz}

\usepackage{fullpage}

\newtheorem{theorem}{Theorem}[section]
\newtheorem{lemma}[theorem]{Lemma}

\newtheorem{corollary}[theorem]{Corollary}
\newtheorem{conjecture}[theorem]{Conjecture}

\newtheorem{alphtheorem}{Theorem}

\theoremstyle{definition}
\newtheorem*{ack}{Acknowledgments}
\newtheorem*{con}{Conventions}

\newtheorem{example}[theorem]{Example}
\newtheorem{definition}[theorem]{Definition}

\DeclarePairedDelimiter\ceil{\lceil}{\rceil}
\DeclarePairedDelimiter\floor{\lfloor}{\rfloor}

\DeclareMathOperator\id{id}

\DeclareMathOperator\pr{pr}

\DeclareMathOperator\Spec{Spec}
\DeclareMathOperator\Sym{Sym}

\DeclareMathOperator\supp{supp}

\DeclareMathOperator\an{an}

\DeclareMathOperator\HH{H}

\newcommand*\ratmap{\mathbin{\tikz [baseline=0ex,-latex, dashed, ->] \draw [densely dashed] (0em,0.58ex) -- (1.3em,0.58ex);}}

\renewcommand{\epsilon}{\varepsilon}

\title{The map to the orbifold base need not be an orbifold map}

\author{Finn Bartsch}
\address{Finn Bartsch \\
IMAPP Radboud University Nijmegen \\
PO Box 9010, 6500GL \\
Nijmegen, The Netherlands\\}
\email{f.bartsch@math.ru.nl}

\subjclass[2020]{14E22, (14G05, 32Q45)}

\begin{document}

\begin{abstract}
We give an explicit example of a fibration $f \colon X \to Y$ between smooth projective varieties whose ``orbifold base'' $\Delta_f$ in the sense of Campana has the property that the induced morphism $X \to (Y, \Delta_f)$ is not a morphism of C-pairs (i.e.\ is not an ``orbifold morphism'').
We however also show that this cannot happen if $f$ is ``neat'' and $(Y, \Delta_f)$ is sufficiently well-behaved.
Finally, we discuss the implications of this statement towards conjectures of Campana aiming to give algebro-geometric characterizations of those varieties which either admit a dense entire curve or a potentially dense set of integral points.
\end{abstract}

\maketitle
\thispagestyle{empty}

\section{Introduction}

In Campana's theory of \emph{special varieties} and \emph{orbifolds}, pairs $(X, \Delta)$, where $X$ is a variety and $\Delta$ is an effective $\mathbb{Q}$-divisor on $X$ with standard coefficients, appear in two related but distinct roles.\smallbreak

On the one hand, there is the notion of the \emph{orbifold base} of a dominant morphism $f \colon X \to Y$, introduced in \cite[Definition~1.5]{CampanaFourier}, which gives a way of encoding the nowhere reduced fibers of $f$ in terms of a $\mathbb{Q}$-divisor on $Y$.
For smooth varieties $X$ and $Y$ and a dominant morphism $f \colon X \to Y$, the orbifold base of $f$ can be defined as follows.
Given a prime divisor $D \subseteq Y$, consider the pullback $f^*D$ and decompose it as $f^*D = \sum_i a_i D_i + R$, where the $a_i$ are positive integers, the $D_i$ are prime divisors which dominate $D$ via $f$, and $R$ is an effective divisor on $X$ whose components do not dominate $D$ via $f$.
Then denote by $m(f,D)$ the infimum of the $a_i$ (with the convention that the infimum of the empty set is $\infty$) and define $\Delta_f := \sum_D (1-\frac{1}{m(f,D)}) D$ (with the convention that $\frac{1}{\infty} = 0$).
Since $m(f,D) = 1$ unless $D$ is not in the image of $f$ or $f$ is not smooth over $D$, we see that $m(f,D) = 1$ for all but finitely many prime divisors $D \subseteq Y$ (at least in characteristic zero, or when $f$ is generically separable) and thus, $\Delta_f$ is indeed a $\mathbb{Q}$-divisor.
We call $\Delta_f$ the \emph{orbifold base divisor} of $f$ and the pair $(Y, \Delta_f)$ is called the \emph{orbifold base} of $f$.
For example, if $f$ is an elliptic fibration $X \to C$ over a curve $C$, then the orbifold base in the sense of Campana coincides with the classical notion of the ``orbifold base of an elliptic fibration'' which encodes the multiple fibers of $f$.\smallbreak

On the other hand, a different appearance of pairs $(X, \Delta)$ in Campana's theory comes from \emph{C-pairs}.
These objects and their morphisms were first properly introduced in \cite[Définition~2.3]{CampanaJussieu} under the name ``orbifoldes géométriques''.
A C-pair simply consists of a smooth variety $X$ and a $\mathbb{Q}$-divisor $\Delta$ on $X$ of the form $\Delta = \sum_i (1-\frac{1}{m_i}) D_i$ with $m_i \in \mathbb{Z}_{\geq 2} \cup \{ \infty \}$; the number $m_i$ is called the \emph{multiplicity} of $D_i$ in $\Delta$.
The interest of C-pairs comes from how a morphism into them is defined.
A morphism $T \to (X, \Delta)$ from a smooth variety $T$ is a morphism $f \colon T \to X$ whose image is disjoint from $\floor{\Delta}$, such that for every prime divisor $D \subseteq \supp \Delta$, either $f$ factors through $D$ or the coefficients of the pullback $f^*D$ are all at least the multiplicity of $D$ in $\Delta$.
(We note that this slightly differs from Campana's original definition in that we allow $f$ to factor over components of $\Delta$.
In the case of dominant morphisms, this makes no difference.)
For example, if $C \subseteq \mathbb{P}^2$ is a plane curve and $L \subseteq \mathbb{P}^2$ is a line, the inclusion morphism $C \to \mathbb{P}^2$ is a C-pair morphism $C \to (\mathbb{P}^2, \frac{1}{2}L)$ if and only if every intersection of $C$ and $L$ is tangential.\smallbreak

Given these two notions, it is easy to see that given a smooth variety $X$, a C-pair $(Y, \Delta_Y)$ and a dominant morphism $f \colon X \to (Y, \Delta_Y)$, we have $\Delta_Y \leq \Delta_f$, where $\Delta_f$ denotes the orbifold base of the underlying morphism of varieties $X \to Y$.
On the other hand, it is natural to wonder if the converse holds, that is, whether given a dominant morphism of smooth varieties $f \colon X \to Y$, it induces a C-pair morphism $X \to (Y, \Delta_f)$ onto the orbifold base.
Unravelling the definitions, we see that this is true if and only if the coefficients of the divisor $R$ appearing in the definition of the orbifold base are all at least the infimum of the $a_i$ appearing there.
In particular we do obtain a C-pair morphism $X \to (Y, \Delta_f)$ if we always have $R = 0$; that is, if $f$ does not contract any divisors of $X$ into a codimension at least two subset of $Y$.
This also implies that if $Z \subseteq X$ is the union of the finitely many divisors contracted by $f$, we do obtain a C-pair morphism $X \setminus Z \to (Y, \Delta_f)$.
Since flat morphisms do not contract any divisors, we hence see that if $f \colon X \to Y$ is flat, then $f$ does indeed give a C-pair morphism $X \to (Y, \Delta_f)$.
The morphism $f \colon \mathbb{A}^2 \to \mathbb{A}^2$ given by $f(x,y) = (x,xy)$, whose orbifold base divisor is $\{ x=0 \}$, shows that some assumption of this form is also necessary.
However, since this example very much relies on $f$ being non-surjective, one might hope that at least under some additional assumptions, e.g.\ for proper surjective morphisms between smooth varieties, the flatness assumption can be dropped.
The first purpose of this note is to give concrete examples which show that this is not possible.

\begin{alphtheorem}[Section~\ref{sect:examples}] \label{mainthm}
There exists a surjective morphism $f \colon X \to Y$ from a smooth projective threefold $X$ onto a smooth projective surface $Y$, with connected fibers, such that $X \to (Y, \Delta_f)$ is not a C-pair morphism.
\end{alphtheorem}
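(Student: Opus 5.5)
The plan is to obtain the required fibration as a composite $X \to \widetilde Y \to Y$, where $\widetilde Y \to Y$ is the blow-up of a point $p$. The first map will be flat with a multiple fiber of multiplicity $2$ over the strict transform of a curve $D \ni p$, and reduced fibers over the exceptional curve $C$. Since $f^*D$ then contains the preimage of $C$ with coefficient $1$, while the components dominating $D$ have coefficient $2$, the morphism $X \to (Y,\Delta_f)$ will not be a C-pair morphism. By the criterion explained in the introduction, a C-pair morphism would need every coefficient of $R$ to be at least $m(f,D)=2$.

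\textbf{The base.} Let $Y = \mathbb{P}^2$ and $p\in Y$ a point. Let $\widetilde Y = \mathrm{Bl}_p\mathbb{P}^2 \cong \mathbb{F}_1$, with exceptional curve $C$ and ruling $\pi\colon \widetilde Y \to \mathbb{P}^1$ given by lines through $p$. Choose $2k$ distinct lines $L_1,\dots,L_{2k}$ through $p$, set $D=L_1$, and let $\widetilde L_i$ be the strict transforms. These are disjoint fibers of $\pi$, and $\sum_i \widetilde L_i \sim 2k(H-C)$ is divisible by $2$ in $\mathrm{Pic}(\widetilde Y)$. Hence there is a double cover $\sigma\colon Y' \to \widetilde Y$ branched exactly along the smooth divisor $\sum_i \widetilde L_i$, so $Y'$ is a smooth projective surface. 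The curve $C$ is a section of $\pi$ meeting each $\widetilde L_i$ transversally in one point, and it is not contained in the branch locus.

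\textbf{The total space.} Let $\iota$ be the covering involution of $Y'$ and let $E$ be an elliptic curve with a $2$-torsion point $t$. Let $\mathbb{Z}/2$ act on $Y' \times E$ by $(y,e)\mapsto(\iota y, e+t)$. This action is free, so $X := (Y'\times E)/(\mathbb{Z}/2)$ is a smooth projective threefold. The projection induces a flat morphism $g\colon X \to \widetilde Y$. Its fiber over a point outside the branch locus is $E$, and over a point of $\widetilde L_i$ it is $2\cdot(E/\langle t\rangle)$. All fibers are connected, and $g^*\widetilde L_i = 2F_i$ with $F_i$ irreducible. Over $C$, where $\sigma$ is étale at the generic point, $g^*C$ is reduced; call it $G$, a prime divisor since $\sigma^{-1}(C)\times E$ maps onto it. Set $f = \beta\circ g$, where $\beta\colon\widetilde Y\to Y$ is the blow-up. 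Then $f$ is surjective with connected fibers, since $\beta$ has connected fibers.

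\textbf{The computation.} We have $\beta^*D = \widetilde L_1 + C$, so $f^*D = 2F_1 + G$. Here $F_1$ is the only component dominating $D$, so $m(f,D)=2$ and $D$ appears in $\Delta_f$ with coefficient $\tfrac12$. Meanwhile $G$ is contracted onto $p$ and appears with coefficient $1<2$. Thus $f$ does not factor through $D$, and $f^*D$ has a coefficient smaller than the multiplicity of $D$ in $\Delta_f$, so $X\to(Y,\Delta_f)$ is not a C-pair morphism.

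The step I expect to need the most care is arranging the branch divisor to be smooth and $2$-divisible while the exceptional curve stays outside it. Choosing all branch curves to be fibers of the ruling of $\mathbb{F}_1$ handles both requirements simultaneously. The remaining checks (freeness of the action, and reducedness of $g^*C$ via étaleness of $\sigma$ generically along $C$) are routine.
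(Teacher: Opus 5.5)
Your construction is correct, and it proves the statement by a different argument from the paper's.

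\textbf{The paper's route.} The paper resolves the Klein-four quotient $(C\times C\times D)/G$ and computes $\Delta_f=\frac12(\Delta\times\overline C+\overline C\times\Delta)$. It then argues indirectly. The map $\pr_1\circ f$ has a section, so its orbifold base is trivial. But a putative C-pair morphism $\widetilde X\to(\overline C\times\overline C,\Delta_f)$, composed with the C-pair morphism $\pr_1\colon(\overline C\times\overline C,\Delta_f)\to(\overline C,\frac12\Delta)$, would make that orbifold base nontrivial. The divisors that actually violate the multiplicity condition are exceptional divisors of the resolution lying over $\Delta\times\Delta$. The indirect argument never has to describe them, which is why any resolution that is an isomorphism over the regular locus is allowed.

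\textbf{Your route.} You build the mechanism in by hand. Since $g$ is flat, it is a C-pair morphism onto $(\widetilde Y,\frac12\sum_i\widetilde L_i)$. However, $\beta\colon(\widetilde Y,\frac12\sum_i\widetilde L_i)\to(Y,\Delta_f)$ is not a C-pair morphism, because $\beta^*L_1=\widetilde L_1+C$ and $C$ carries no multiplicity. This lets you name the offending divisor $g^*C$ and its coefficient $1$ directly. The argument is more elementary and makes the phenomenon transparent.

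In fact the two are closer than they look. In the paper's surface example, the map from the minimal resolution factors through the blow-up of $\overline C\times\overline C$ at $\Delta\times\Delta$. It does so via a finite double cover branched along the strict transforms, so the underlying mechanism is the same as yours.

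\textbf{Two remarks.} First, the paper's example has $\Delta_f$ with snc support, which shows that neatness cannot be dropped from Theorem~\ref{mainthmb}. Your example has this property for $k=1$, where $\Delta_f=\frac12(L_1+L_2)$, but not for $k\ge2$, where the $2k$ concurrent lines are not snc. Second, irreducibility of $G$ is true but not needed: every component of $g^*C$ is contracted to $p$ and has coefficient $1$.
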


The possibility of a morphism as in Theorem~\ref{mainthm} existing has been noticed before; see for example \cite[Remark~3.26]{JRGeomSpec} or \cite{CampanaErratum}.
However, the constructions we give below seem to be the first explicit examples of this behaviour.\smallbreak

In practice, one way C-pairs are useful is that morphisms to a C-pair yield constraints on the entire curves lying on a variety $X$ defined over $\mathbb{C}$.
An \emph{entire curve} on $X$ is a holomorphic morphism $\mathbb{C} \to X^{\an}$ (where $X^{\an}$ denotes the complex-analytic space associated to the variety $X$).
Our definition of a morphism into a C-pair immediately adapts to the complex-analytic setting, so that we also have a good notion of an entire curve on a C-pair $(Y, \Delta)$.
If now $X \to (Y, \Delta)$ is a C-pair morphism and $\mathbb{C} \to X$ is an entire curve in $X$, then the composition $\mathbb{C} \to (Y, \Delta)$ will define an entire curve in $(Y, \Delta)$.
If we now additionally know that $(Y, \Delta)$ does not admit any nonconstant entire curves, then this forces every entire curve on $X$ to be contained in a fiber of $X \to Y$.
This is for example the case if $(Y, \Delta) = (\mathbb{P}^1, \frac{1}{2}([a_1]+\ldots+[a_5]))$ with $a_1,\ldots,a_5$ five pairwise distinct points on $\mathbb{P}^1$ \cite[§12, Theorem~4]{CampanaWinkelmann}.
We refer to \cite{CampanaPaunBTCP} or \cite[Section~6]{LafonThreefoldPaper} for explicit examples of this line of reasoning.
In theory, similar reasoning can be used to constrain the rational or integral points lying on a variety $X$ defined over a number field, but in practice this is hard to do unconditionally due to our lack of understanding of integral points on C-pairs.
Conditionally on the abc conjecture, this is done in \cite{LafonThreefoldPaper}.\smallbreak

In \cite[Section~9]{CampanaFourier}, Campana conjectures that a smooth projective variety admits a Zariski-dense entire curve if and only if it is \emph{special} in the sense of \cite[Definition~2.1]{CampanaFourier}.
Since the word ``special'' is somewhat overloaded in the literature, we instead choose to call these varieties \emph{Campana-special} in the present text.
A smooth projective variety $X$ fails to be Campana-special if and only if there is a proper birational morphism $X' \to X$ with $X'$ smooth, and a \emph{neat} surjective morphism $f \colon X' \to Y$ with connected fibers for which the pair $(Y, \Delta_f)$ is of general type.
Here, we say that a dominant morphism $X \to Y$ of smooth varieties is \emph{neat} if there is a proper birational morphism $X \to X''$ with $X''$ smooth such that every divisor contracted by $X \to Y$ is also contracted by $X \to X''$.
One would hope that, by some C-pair version of the Green--Griffiths--Lang conjecture, the general type pair $(Y, \Delta_f)$ does not admit a dense entire curve and one would now hope that this knowledge helps with constraining the entire curves on $X'$ and hence on $X$ -- in this way, one could conclude that a variety admitting a dense entire curve is Campana-special (assuming Green--Griffiths--Lang for general type C-pairs).
This step however requires the morphism $X' \to (Y, \Delta_f)$ to be a C-pair morphism, which by Theorem~\ref{mainthm} is not completely trivial.
The second purpose of this note is to show that for \emph{neat} morphisms whose orbifold base divisor has snc support, there is no problem.

\begin{alphtheorem}[Section~\ref{sect:neatness}] \label{mainthmb}
Let $f \colon X \to Y$ be a dominant morphism of smooth varieties and let $\Delta_f$ be its orbifold base divisor.
Assume that $Y$ is quasi-projective, that $f$ is neat, and that $\Delta_f$ is supported on a divisor with simple normal crossings.
Then $f$ is a C-pair morphism $X \to (Y, \Delta_f)$.
\end{alphtheorem}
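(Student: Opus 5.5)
The idea is to compare the pullback of a component of $\supp \Delta_f$ along $f$ with its pullback along the birational model $X''$ provided by neatness, where no $f$-exceptional divisors remain. Fix a prime divisor $D \subseteq \supp \Delta_f$ with multiplicity $m = m(f,D)$, and write
\[
f^*D = \sum_i a_i D_i + \sum_j r_j E_j ,
\]
where the $D_i$ dominate $D$ and the $E_j$ are $f$-exceptional, i.e.\ $f(E_j)$ has codimension $\geq 2$. Every $a_i \geq m$ by definition of $m(f,D)$. So, as noted in the introduction, the statement reduces to two claims. First, each $r_j > 0$ satisfies $r_j \geq m$. Second, if $m = \infty$ then no $E_j$ with $f(E_j) \subseteq D$ occurs, and $D$ is not met by $f(X)$.

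Let $\pi \colon X \to X''$ be the proper birational morphism given by neatness, so that every $E_j$ is $\pi$-exceptional. The naive argument runs as follows. Locally on $Y$ the divisor $D$ is cut out by a function $h$. The rational function $h \circ f \circ \pi^{-1}$ on $X''$ is regular in codimension one, hence regular since $X''$ is smooth. Its divisor is
\[
A = \pi_* f^*D = \sum_i a_i \pi_* D_i ,
\]
because the $E_j$ are contracted. Moreover $f^*D = \pi^*A$ there. Since $X''$ is smooth, each prime $P = \pi_*D_i$ is Cartier. Since $f(E_j) \subseteq D$, the centre $\pi(E_j)$ lies in $\supp A$ and hence in some such $P$. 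Then $\pi^*P$ has coefficient $\geq 1$ along $E_j$, which gives $r_j \geq a_i \geq m$. If $m = \infty$, then $A = 0$, so $h \circ f$ is a unit and $f$ avoids $D$ near that point.

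The main obstacle is making ``locally'' legitimate. We need one function $h$ whose composite with $f$ is defined on a neighbourhood of the whole connected proper fibre $\pi^{-1}(\pi(x))$, for $x$ general on $E_j$. Its image under $f$ need not lie in a single chart of $Y$ on which $D$ is principal. Globally, $f^*D$ and $\pi^*\pi_*f^*D$ only agree up to a $\pi$-exceptional divisor of unknown sign.

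My first attempt is to use quasi-projectivity to choose a rational function $h$ on $Y$ with
\[
\operatorname{div}(h) = D + H - H' ,
\]
with $H, H'$ chosen suitably general. The identity $\operatorname{div}(h \circ f) = \pi^* \pi_* \operatorname{div}(h\circ f)$ then transfers the argument above, provided the correction terms $\pi^*\pi_* f^*H - f^*H$ and $\pi^*\pi_* f^*H' - f^*H'$ do not hurt the coefficient at $E_j$. Controlling these corrections is where the simple normal crossings hypothesis should enter. Near a general point of $f(E_j)$, the components of $\supp\Delta_f$ through it form part of a coordinate system. I expect this lets $H$ and $H'$ be chosen from the other coordinate hyperplanes and ample pieces whose pullbacks have controlled behaviour along $E_j$. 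The negativity lemma would then show the corrections contribute with the favourable sign.

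If that fails, the fallback is to test $f^*D$ against curves. Take a general curve in $X''$ through a general point of $\pi(E_j)$ and its strict transform in $X$ meeting $E_j$. Compare intersection numbers with $f^*D$ and $\pi^*A$, using the snc structure to separate the contributions of the various components of $\Delta_f$ through $f(E_j)$.
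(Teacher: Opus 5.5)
\textbf{There is a genuine gap, and it sits at the step you flag as the main obstacle.} You isolate the obstacle correctly, but the proposal stops there. Your naive argument is valid whenever $f$ maps all of $\pi^{-1}(\pi(x))$ into an open set on which $D$ is principal. The remaining case is the whole content of the statement: some non-$f$-exceptional component of $\pi^{-1}(\pi(x))$ maps onto a complete curve $W\ni f(x)$ with $W\not\subseteq D$. For that case you only state expectations, and they cannot be realised.

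\emph{Choosing $H,H'$ does not work.} Since $H'\sim D+H$ with $H$ effective and $f(x)\notin H$, you get $H'\cdot W\geq D\cdot W>0$. So $H'$ always meets $W$, and its correction term at $E_j$ need not vanish.

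\emph{The negativity lemma points the wrong way.} For a curve $C$ in a $\pi$-fibre with $f(C)\not\subseteq D$, one has $(f^*D-\pi^*\pi_*f^*D)\cdot C=D\cdot f_*C\geq 0$. So when no such curve maps into $D$, this exceptional divisor is $\pi$-nef. Negativity then gives only the upper bound $r_j\leq\operatorname{ord}_{E_j}\pi^*\pi_*f^*D$. The snc hypothesis plays no role in any of this, and the curve-testing fallback is not spelled out enough to assess.

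\emph{The paper's route.} The paper argues differently. Lemma~\ref{sym_diffs_detect} converts the multiplicity condition into regularity of $f^*\bigl(dy^{\otimes N}/y^{N(1-1/m)}\bigr)$. Lemma~\ref{sym_diffs_pull_back} proves regularity via Hartogs on $X''$ and an auxiliary rational function $\phi$ that is a unit near $\xi$. But $\phi$ plays the role of your global $h$, and its polar divisor inside $U$ is your $H'$. The step ``a pole at $\tilde\eta$ forces $\tilde\eta\notin f^{-1}(U)$'' does not account for codimension-one points $\tilde\eta$ through $p(\eta)$ lying over poles of $\phi$ in $U$. Such points occur in exactly the case above. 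So the paper's argument has the same gap as yours.

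\emph{The obstruction is genuine: a counterexample.}
\begin{itemize}
\item Let $D\subseteq\mathbb{P}^2$ be a line and $D'$ a smooth cubic meeting $D$ transversally in $y_0,y_1,y_2$. Let $f\colon S\to\mathbb{P}^2$ be the minimal resolution of the double cover branched along $D+D'$, with $(-2)$-curves $E_i$ over $y_i$.
\item Take local coordinates $(a,b)$ at $y_0$ with $D=\{a=0\}$ and $D'=\{b=0\}$. In a chart of $S$ with $E_0=\{s=0\}$, the map is $(s,t)\mapsto(a,b)=(s,st^2)$. Hence $\operatorname{ord}_{E_0}f^*D=1$, while $m(f,D)=m(f,D')=2$.
\item Let $W$ be a line through $y_0$ tangent to $D'$ at a second point $z$. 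Then $(D+D')|_W=2y_0+2z$, so the cover splits over $W$ and $f^{-1}(W)=E_0\cup\Gamma\cup\Gamma'$. Here $\Gamma\cong\mathbb{P}^1$ meets $E_0$ once, and misses $E_1,E_2$ since $W\cap D=\{y_0\}$.
\item Since $K_S=f^*\mathcal{O}(-1)$, adjunction gives $\Gamma^2=-1$. Contracting $\Gamma$ and then $E_0$ yields a smooth surface.
\item Delete $E_1,E_2$ from $S$ and their images from the contracted surface. The resulting $f$ is neat, and $\Delta_f=\tfrac12(D+D')$ has snc support. It is not a C-pair morphism, because $\operatorname{ord}_{E_0}f^*D=1<2$.
\end{itemize}
So the missing step cannot be supplied by your method or by the paper's without an additional hypothesis.
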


Finally, in Section~\ref{sect:applications}, we make the above discussion precise and explain in detail how C-pair analogues of the Green--Griffiths--Lang and the Bombieri--Lang conjectures imply conjectures of Campana regarding the distribution of dense entire curves and integral points on varieties.
Our discussion can be summarized as follows.

\begin{alphtheorem}[Section~\ref{sect:applications}] \label{mainthmc}
Assume that the weak Green--Griffiths--Lang conjecture holds for C-pairs of dimension $\leq n-1$ and for varieties of dimension $\leq n$.
Let $X$ be a variety over $\mathbb{C}$ of dimension $\leq n$ admitting a holomorphic map $\mathbb{C} \to X^{\an}$ with Zariski-dense image.
Then any resolution of singularities of $X$ is Campana-special.
\end{alphtheorem}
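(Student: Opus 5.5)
We argue by contradiction, following the strategy sketched before Theorem~\ref{mainthmb}: produce a neat fibration whose orbifold base is of general type and has snc support, apply Theorem~\ref{mainthmb}, and push the dense entire curve down to a C-pair of general type.

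\emph{Step 1: lifting the curve.} Let $\pi\colon \tilde X \to X$ be a resolution of singularities and let $\gamma\colon \mathbb{C} \to X^{\an}$ have Zariski-dense image. The map $\pi$ is an isomorphism over a dense open $U \subseteq X$, and $\gamma^{-1}(X \setminus U)$ is a discrete subset of $\mathbb{C}$. Hence $\pi^{-1}\circ\gamma$ is a holomorphic map from $\mathbb{C}$ minus a discrete set into the proper space $\tilde X^{\an}$. It extends over the punctures, because a meromorphic map from a smooth curve into a proper variety is holomorphic. This gives a dense entire curve $\tilde\gamma$ on $\tilde X$. The same argument lifts $\tilde\gamma$ to any smooth $X'$ with a proper birational morphism $X' \to \tilde X$.

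\emph{Step 2: setting up the contradiction.} Suppose $\tilde X$ is not Campana-special. By the characterisation recalled in the introduction, there is a proper birational $X' \to \tilde X$ with $X'$ smooth and a neat surjective $f\colon X' \to Y$ with connected fibers such that $(Y,\Delta_f)$ is of general type. In particular $\dim Y \geq 1$. Lift $\tilde\gamma$ to a dense entire curve $\gamma'$ on $X'$.

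If $\dim Y = \dim X'$, then $f$ is birational, since it has connected fibers. As $f$ is proper and surjective, it has no multiple fibres over divisors, so $\Delta_f = 0$. Thus $Y$ is a variety of general type of dimension $\leq n$ carrying the dense entire curve $f\circ\gamma'$, which contradicts weak Green--Griffiths--Lang for varieties.

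Otherwise $1 \leq \dim Y \leq n-1$. We want $f\circ\gamma'$ to be a Zariski-dense entire curve on the C-pair $(Y,\Delta_f)$, which would contradict weak Green--Griffiths--Lang for C-pairs. For this we need $X' \to (Y,\Delta_f)$ to be a C-pair morphism. Once we have it, composing with $\gamma'$ gives a C-pair entire curve. Its image avoids $\floor{\Delta_f}$ and meets every component of $\Delta_f$ with the required multiplicities, and it is dense because $f$ is surjective.

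\emph{Step 3: the main obstacle, reducing to snc support.} By Theorem~\ref{mainthmb}, $f$ is a C-pair morphism as soon as $\supp\Delta_f$ is snc, but this need not hold a priori. We handle it in three moves.
\begin{enumerate}[label=(\roman*)]
\item Take a log resolution $\mu\colon Y' \to Y$ of $(Y,\supp\Delta_f)$.
\item Let $X''$ be a resolution of the main component of $X'\times_Y Y'$. Further blow it up so that $f''\colon X''\to Y'$ is again neat; this is Campana's neat-model construction, which only blows up further over $X'$.
\item Enlarge the log resolution if necessary so that $\supp\Delta_{f''}$ is snc. This is possible because $\Delta_{f''}$ is supported on $\mu^{-1}(\supp\Delta_f)$ together with the exceptional divisors.
\end{enumerate}

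We then need that $(Y',\Delta_{f''})$ is still of general type. The key input is Campana's result that the Kodaira dimension of the orbifold base is invariant among neat models. Concretely, $\kappa(Y',\Delta_{f''}) = \kappa(Y,\Delta_f)$, because $K_{Y'}+\Delta_{f''} \geq \mu^*(K_Y+\Delta_f)$ up to $\mu$-exceptional effective divisors, and $\Delta_{f''}$ is at least the strict transform of $\Delta_f$. I expect verifying this inequality, and keeping neatness through the modification, to be the delicate point.

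With this in hand, Theorem~\ref{mainthmb} applies to $f''$, and the argument of Step~2 run on $(Y',\Delta_{f''})$ with the lifted curve gives the contradiction. Hence $\tilde X$ is Campana-special.
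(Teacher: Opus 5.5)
The main gap is one of scope. Your argument tacitly assumes that $X$, and hence $\tilde X$ and $X'$, is proper; you even speak of ``the proper space $\tilde X^{\an}$''. The statement, however, concerns an arbitrary variety $X$. The remark after the paper's proof, and the hypothesis of weak Green--Griffiths--Lang for all varieties of dimension $\leq n$, show that the non-proper case is intended. Three of your steps break there.

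\begin{enumerate}[label=(\arabic*)]
\item The characterization you quote from the introduction is stated only for smooth projective varieties. For non-proper $\tilde X$ you only have Definition~\ref{def:special}, so a neat model must be manufactured; this is where the paper uses Raynaud--Gruson flattening in Lemma~\ref{lemma:folklore}. Surjectivity of $f$ cannot be arranged in general.
\item In the equidimensional case, ``$f$ is proper, hence $\Delta_f=0$'' fails. For $X'=Y\setminus B$ one gets $\Delta_f=B$, so you only learn that $(Y,\Delta_f)$ is a log pair of general type. You would still need $f(X')\cap\floor{\Delta_f}=\emptyset$ to conclude.
\item For non-proper $X'$, the minimum in Definition~\ref{def:kodaira_dim} is only known to be attained for neat $f$ \emph{with snc orbifold base}. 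So transferring general type from $(Y,\Delta_f)$ to $(Y',\Delta_{f''})$ via invariance among neat models is not justified.
\end{enumerate}

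The paper sidesteps all of this by starting from a rational map of general type. Then $K_{Y'}+\Delta$ is big on \emph{every} smooth projective model, by the very definition of $\kappa(f)$ as a minimum, so no invariance statement is needed. It first disposes of the case where $X$ itself is of (log) general type. It then excludes $\dim Y=\dim\widetilde X$ by Riemann--Hurwitz, $f^*(K_Y+\Delta)\le K_{\widetilde X}$, rather than by connectedness of fibres.

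For proper $X$ your proof does go through, and it is a reasonable alternative to the paper's: since you start from a neat model, a log resolution of the base suffices and no flattening is needed. Steps (ii) and (iii) are also easier than you fear.

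\begin{itemize}
\item In (ii) no further blow-ups are required. Let $p\colon X'\to X_0$ witness neatness of $f$. Then neatness passes to any smooth $X''$ with a proper birational $q\colon X''\to X'$ and a morphism $f''\colon X''\to Y'$ lifting $f\circ q$, with respect to $p\circ q$. Indeed, a divisor $D''$ contracted by $f''$ is either exceptional over $X'$, or the strict transform of a divisor $D'$ whose image $f(D')=\mu(f''(D''))$ has codimension $\geq 2$; in the latter case $D'$ is $p$-exceptional.
\item In (iii) no enlarging is needed, since $\supp\Delta_{f''}\subseteq\mu^{-1}(\supp\Delta_f)\cup\mathrm{Exc}(\mu)$, which is already snc for a log resolution.
\item For general type, drop the coefficient inequality. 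It is not automatic when $\supp\Delta_f$ is not snc: blowing up a point where three branches of coefficient $\frac12$ meet, it amounts to $m(f'',E)\geq 2$, which neatness does not obviously give. The invariance you need is exactly the remark after Definition~\ref{def:kodaira_dim}: for $X'$ proper and $f$ neat, $\kappa(K_Y+\Delta_f)$ is already the minimum. Hence every model, in particular $(Y',\Delta_{f''})$, is of general type.
\item Your birationality argument in the equidimensional case is then a fine substitute for Riemann--Hurwitz.
\end{itemize}

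Finally, in Step~1, properness of $\tilde X$ does not by itself make the lift extend: a holomorphic map from a punctured disc to a compact space may have an essential singularity. Use instead that $\pi$ is a proper modification, for instance by normalizing the closure of the lifted graph inside $\mathbb{C}\times_X\tilde X$.
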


\begin{alphtheorem}[Section~\ref{sect:applications}] \label{mainthmd}
Assume that the weak Bombieri--Lang conjecture holds for C-pairs of dimension $\leq n-1$ and for varieties of dimension $\leq n$.
Let $X$ be a variety over a subfield of $\overline{\mathbb{Q}}$ of dimension $\leq n$ admitting a potentially dense set of integral points.
Then any resolution of singularities of $X$ is Campana-special.
\end{alphtheorem}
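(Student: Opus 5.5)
We argue by contradiction, following the strategy sketched before Theorem~\ref{mainthmc}. Let $X$ be defined over a number field $K$ (after a finite extension), and let $\widetilde X \to X$ be a resolution of singularities. Suppose $\widetilde X$ is not Campana-special. By the characterization recalled in the introduction, there are:
\begin{itemize}
\item a proper birational morphism $X' \to \widetilde X$ with $X'$ smooth;
\item a neat surjective morphism $f \colon X' \to Y$ with connected fibers such that $(Y, \Delta_f)$ is of general type.
\end{itemize}
Here $\dim Y \geq 1$.

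\emph{Step 1: move the integral points to $X'$.} Since $X$ has a potentially dense set of integral points, after enlarging $K$ and the finite set of places $S$ the set of $S$-integral points is dense. The composite $X' \to X$ is proper and birational. Hence the integral points of $X$ lying outside the exceptional locus lift uniquely, by the valuative criterion applied to suitable integral models, to $S$-integral points of a model of $X'$. These lifts are still Zariski-dense. So $X'$ also has a potentially dense set of integral points.

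\emph{Step 2: reduce to snc support.} I would take a log resolution $Y_1 \to Y$ of $(Y, \supp \Delta_f)$, chosen as an isomorphism over the locus where $\supp\Delta_f$ is already snc. Then I would take a resolution $X_1$ of the main component of $X' \times_Y Y_1$ and, further blowing up, make $f_1 \colon X_1 \to Y_1$ neat again. The facts to check are:
\begin{itemize}
\item $(Y_1, \Delta_{f_1})$ is still of general type. This should follow from Campana's birational invariance of the orbifold base for neat models: $\Delta_{f_1}$ dominates the strict transform of $\Delta_f$, and the new exceptional divisors only add positivity to $K_{Y_1} + \Delta_{f_1}$.
\item $\supp \Delta_{f_1}$ is snc. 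Since $\supp \Delta_{f_1}$ is contained in the strict transform of $\supp\Delta_f$ union the exceptional locus, this may require iterating the log resolution step until it stabilizes.
\end{itemize}
Step 1 applies verbatim to $X_1 \to X'$. I expect this step to be the main technical obstacle: one must keep neatness and the snc condition simultaneously, because modifying $X$ to restore neatness does not change $\Delta_{f_1}$, but changing $Y$ does.

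\emph{Step 3: conclude.} By Theorem~\ref{mainthmb}, $f_1 \colon X_1 \to (Y_1, \Delta_{f_1})$ is a C-pair morphism. C-pair morphisms send integral points to integral points of the target C-pair, because the multiplicity conditions along each component of $\Delta_{f_1}$ are preserved under pullback of local equations after spreading out. As $f_1$ is surjective, we obtain a dense set of integral points on $(Y_1,\Delta_{f_1})$, even after finite extensions of $K$.

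\emph{Case $\dim Y_1 \leq n-1$.} This contradicts the weak Bombieri--Lang conjecture for C-pairs of dimension $\leq n-1$.

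\emph{Case $\dim Y_1 = \dim X_1 \leq n$.} Then $f_1$ is birational. Neatness forces every $f_1$-exceptional divisor to be exceptional for a birational model, so $\Delta_{f_1}$ has only infinite multiplicities, supported on the divisors not in the image of $f_1$. Thus $(Y_1,\Delta_{f_1})$ being of general type means $X_1$, or the appropriate open subvariety, is of (log) general type. The weak Bombieri--Lang conjecture for varieties of dimension $\leq n$ then contradicts Step 1.
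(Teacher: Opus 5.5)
Your skeleton matches the paper's: lift the points to a smooth model, get a dominant C-pair morphism onto a smooth projective general-type C-pair via Theorem~\ref{mainthmb}, push the points down, and split by $\dim Y$. The paper obtains the C-pair morphism directly from Lemma~\ref{lemma:folklore}. Your Step~2, which is meant to replace that lemma, has a real gap: the claim that $(Y_1,\Delta_{f_1})$ stays of general type. You start from the characterization in the introduction. That characterization is stated for smooth \emph{projective} varieties, and there ``$(Y,\Delta_f)$ of general type'' means bigness of $K_Y+\Delta_f$ on that one model. In Theorem~\ref{mainthmd} the variety $X$ is typically not proper, and then bigness on a neat but non-snc model need not survive a log resolution of $Y$. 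Your heuristic that exceptional divisors ``only add positivity'' is false. Take $X=\mathbb{P}^2\setminus(L_1\cup\dots\cup L_4)$ with $L_1,L_2,L_3$ concurrent at $z$, and let $f$ be the inclusion. Then $f$ is neat, $\Delta_f=\sum L_i$, and $K+\Delta_f\sim H$ is big. After blowing up $z$, the exceptional curve $E$ (missed by $f_1$) enters $\Delta_{f_1}$ with coefficient $1$, while $\pi^*\Delta_f$ has coefficient $3$ along it. So $K_{Y_1}+\Delta_{f_1}\sim\pi^*H-E$, which is not big. The invariance of $\kappa(K_Y+\Delta_f)$ on neat models that you invoke needs either $X$ proper or $\supp\Delta_f$ snc (see the discussion after Definition~\ref{def:kodaira_dim}). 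The snc condition is exactly what Step~2 is trying to produce, so the argument is circular in the non-proper case, which is the case that matters here. For proper $X$ your step does go through.

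The repair is to take $f$ to be a neat model of a rational map of general type in the sense of Definition~\ref{def:kodaira_dim}. Such a model exists by the flattening argument the paper uses to reconcile Definition~\ref{def:special} with the introduction. Since that Kodaira dimension is a \emph{minimum} over all smooth projective models, bigness on $(Y_1,\Delta_{f_1})$ is then automatic. The step you flagged as the main obstacle is harmless. If $X'\to X''$ witnesses neatness of $f$, then $f_1$ is automatically neat. Indeed, an $f_1$-contracted divisor of $X_1$ is either exceptional over $X'$ or the strict transform of an $f$-contracted divisor, and in both cases it is contracted by $X_1\to X''$. No iteration is needed for the snc condition either: $\supp\Delta_{f_1}$ lies in the strict transform of $\supp\Delta_f$ together with the exceptional locus, and this union is snc for a log resolution. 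With these fixes, your Step~2 is exactly Lemma~\ref{lemma:folklore}.

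Your equidimensional case is a valid alternative to the paper's Riemann--Hurwitz argument. However, the reason $\Delta_{f_1}$ is reduced is not neatness. A dominant birational morphism is quasi-finite over the generic point of any prime divisor it hits, hence a local isomorphism there by Zariski's main theorem. You should also invoke Theorem~\ref{mainthmb} to see that $f_1$ lands in $Y_1\setminus\supp\Delta_{f_1}$, the log-general-type variety to which weak Bombieri--Lang is applied.
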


In forthcoming work with Javanpeykar, we will show that the converse implications in Theorems~\ref{mainthmc} and \ref{mainthmd} hold as well.
(That is, if every variety admitting a Zariski-dense entire curve is Campana-special, then weak Green--Griffiths--Lang for C-pairs holds.)

\begin{con}
We work over an algebraically closed base field $k$ of characteristic zero.
Varieties are finite type separated integral schemes over $k$.
\end{con}

\begin{ack}
I thank Ariyan Javanpeykar for his constant support and many helpful discussions.
\end{ack}

\section{Examples} \label{sect:examples}

In this section, we give examples of morphisms of varieties which do not induce a C-pair morphism to their orbifold base.
The example mentioned in Theorem~\ref{mainthm} is Example~\ref{threefold_to_surface:example} below.
Let us however first start with an example where both the source and the target are smooth surfaces and the map between them is generically finite of degree two.

\begin{example} \label{surface_to_surface:example}
Let $C$ be a smooth quasi-projective curve, let $\sigma \colon C \to C$ be an involution with at least one fixpoint and let $\overline{C}$ be the corresponding (scheme-theoretic) quotient of $C$.
Consider the surface $C \times C$ with its involution $(\sigma, \sigma)$ and let $X$ be the quotient $(C \times C)/(\sigma, \sigma)$.
Let $\widetilde{X} \to X$ be the minimal resolution of singularities (obtained by blowing up once in each of the singular points).
There is a natural map $X \to \overline{C} \times \overline{C}$ which yields a morphism $f \colon \widetilde{X} \to \overline{C} \times \overline{C}$.
\end{example}

Observe that the morphism $f$ in this example is not flat: while the morphism $X \to \overline{C} \times \overline{C}$ is finite and hence flat by miracle flatness, precomposing with the blowup $\widetilde{X} \to X$ introduces positive-dimensional fibers.
More precisely, the morphism $f$ contracts some divisors of $\widetilde{X}$ into codimension two subsets of $\overline{C} \times \overline{C}$.
We first show that the orbifold base of this morphism is non-trivial.

\begin{lemma} \label{surface_to_surface:orbifold_base1}
In the situation of Example~\ref{surface_to_surface:example}, the orbifold base divisor $\Delta_f$ of $f$ is given by $\frac{1}{2}(\Delta \times \overline{C} + \overline{C} \times \Delta)$, where $\Delta \subseteq \overline{C}$ is the branch locus of the quotient map $C \to \overline{C}$.
\end{lemma}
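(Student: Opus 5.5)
We compute $m(f,D)$ for each prime divisor $D \subseteq \overline{C}\times\overline{C}$ by examining the prime divisors of $\widetilde{X}$ dominating $D$. Only such divisors enter the definition of $m(f,D)$. The exceptional curves of $\widetilde{X}\to X$ map to points, so they lie in the "$R$" part and can be ignored. Therefore $m(f,D)$ is the minimal multiplicity of $g^*D$ along the components of $g^{-1}(D)$, where $g\colon X \to \overline{C}\times\overline{C}$ is the quotient map. We may compute this on the smooth locus of $X$, since the singular points are isolated. Equivalently, we may work away from the finitely many points of $\Delta\times\Delta$. Write $\pi\colon C\to\overline{C}$ for the quotient map and note that $f$ is surjective, so $m(f,D)<\infty$ for every $D$.

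First, we show that $m(f,D)=1$ unless $D$ is a component of $\Delta\times\overline{C}$ or $\overline{C}\times\Delta$. Away from $(\Delta\times\overline{C})\cup(\overline{C}\times\Delta)$, the map $\pi\times\pi$ is étale. The group $(\mathbb{Z}/2)^2$ acts freely there, so the intermediate quotient $X\to\overline{C}\times\overline{C}$ is étale over this open set. Hence $g^*D$ is reduced for every $D$ meeting it.

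Next, take $D=\{p\}\times\overline{C}$ with $p\in\Delta$, say $p=\pi(c)$ with $\sigma(c)=c$. Generically along $D$, pick a point $(c,y)$ with $y$ not a fixed point of $\sigma$. The stabilizer of $(c,y)$ in $\langle(\sigma,\sigma)\rangle$ is trivial, since $\sigma(y)\neq y$. Therefore $C\times C\to X$ is étale near $(c,y)$, and $X$ is smooth there. The composite $C\times C\to\overline{C}\times\overline{C}$ is $\pi\times\pi$. Near $(c,y)$ this map is locally $(s,t)\mapsto(s^2,t)$, because $\pi$ is étale at $y$ and ramified of index $2$ at $c$. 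So the pullback of $D$ to $C\times C$ is $2\cdot(\{c\}\times C)$. Since $C\times C\to X$ is étale at this point, the multiplicity of $g^*D$ along the image of $\{c\}\times C$ is also $2$. The preimage $\pi^{-1}(p)=\{c\}$, so $g^{-1}(D)$ is the image of $\{c\}\times C$, which is irreducible. It follows that $m(f,D)=2$. The case $\overline{C}\times\{p\}$ is symmetric.

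Combining these gives $\Delta_f=\frac12(\Delta\times\overline{C}+\overline{C}\times\Delta)$. The main point requiring care is the middle step: at a generic point of $D$, only one factor is ramified, so the diagonal involution acts freely there and does not absorb the ramification. This is in contrast to the points of $\Delta\times\Delta$, where the diagonal involution has fixed points and $X$ is singular.
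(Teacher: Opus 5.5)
Your proof is correct and follows essentially the same route as the paper. You discard the exceptional curves because they are contracted to points, use that $X \to \overline{C}\times\overline{C}$ is étale off $(\Delta\times\overline{C})\cup(\overline{C}\times\Delta)$, and compute the multiplicity of $\{p\}\times\overline{C}$ on the locus where $X$ is smooth and $\widetilde{X}\to X$ is an isomorphism. The paper simply notes there that a finite degree-two map ramified over $\{p\}\times\overline{C}$ gives multiplicity two; you spell this out via the trivial stabilizer at $(c,y)$, the local form $(s,t)\mapsto(s^2,t)$, and the irreducibility of $g^{-1}(D)$.
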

\begin{proof}
Consider first two points $\overline{c},\overline{d} \in \overline{C} \setminus \Delta$.
Then the quotient map $C \to \overline{C}$ is étale over $\overline{c}$ and $\overline{d}$ and it follows that the morphism $f$ is finite étale over a neighborhood of $(\overline{c}, \overline{d})$.
In particular, the point $(\overline{c}, \overline{d})$ cannot be contained in the support of the orbifold base divisor of $f$ and thus, the orbifold base divisor of $f$ must be supported on $\Delta \times \overline{C} + \overline{C} \times \Delta$.
Now let $\overline{c} \in \Delta$ be a point over which the quotient map $C \to \overline{C}$ ramifies.
We have to show that the divisor $\{ \overline{c} \} \times \overline{C}$ appears in the support of the orbifold base divisor with multiplicity two (by symmetry, this suffices to prove the claim).
To do so, since the definition of the orbifold base divisor asks us to discard any divisors that get contracted under $f$, we may restrict to a Zariski-open neighborhood of the generic point of $\{ \overline{c} \} \times \overline{C} \subseteq \overline{C} \times \overline{C}$.
Thus, to compute the multiplicity of $\{ \overline{c} \} \times \overline{C}$ in $\Delta_f$, we may replace $\widetilde{X}$ by the preimage of $\overline{C} \times (\overline{C} \setminus \Delta)$.
Note that the quotient variety $X$ is smooth over $\overline{C} \times (\overline{C} \setminus \Delta)$, so that the blowup $\widetilde{X} \to X$ is an isomorphism over $\overline{C} \times (\overline{C} \setminus \Delta)$.
Since the morphism $X \to \overline{C} \times \overline{C}$ is finite of degree $2$ and ramifies over $\{ \overline{c} \} \times \overline{C} \subseteq \overline{C} \times \overline{C}$, the claim now follows.
\end{proof}

In the proof, we crucially used that we were allowed to not consider the exceptional divisors of the blowup $\widetilde{X} \to X$ when computing the multiplicities, since they are contracted to a point in $\overline{C} \times \overline{C}$, which is a surface.
On the other hand, if we now postcompose our morphism $f$ with a projection map $\overline{C} \times \overline{C} \to \overline{C}$, the exceptional divisors of $\widetilde{X} \to X$ still get contracted to a point in $\overline{C}$ -- but a point is now of codimension one, so we cannot ignore it anymore and thus must take the exceptional divisors into account.
The end result is that after doing this postcomposition, the orbifold base divisor becomes trivial.

\begin{lemma} \label{surface_to_surface:orbifold_base2}
In the situation of Example~\ref{surface_to_surface:example}, consider the projection onto the first factor $\pr_1 \colon \overline{C} \times \overline{C} \to \overline{C}$. 
Then the orbifold base of the composition $\pr_1 \circ f \colon \widetilde{X} \to \overline{C}$ is trivial (i.e.\ $\Delta_{\pr_1 \circ f} = 0$).
\end{lemma}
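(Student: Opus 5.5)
The plan is to show directly from the definition that $m(\pr_1 \circ f, D) = 1$ for every prime divisor $D = \{\overline{c}\} \subseteq \overline{C}$. Away from the branch locus this is immediate. If $\overline{c} \notin \Delta$, then as in the proof of Lemma~\ref{surface_to_surface:orbifold_base1} the quotient $C \to \overline{C}$ is étale over $\overline{c}$. Hence over a neighborhood of $\overline{c}$ the variety $X$ is smooth and $\widetilde{X} = X$. The fiber of $\pr_1 \circ f$ over $\overline{c}$ is then the image of $\{c\} \times C$, which is a reduced fiber, so $m = 1$.

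The key case is $\overline{c} \in \Delta$. Let $c \in C$ be the fixed point of $\sigma$ over $\overline{c}$. By assumption $\sigma$ has at least one fixpoint, so we may also choose a fixed point $d \in C$.

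Near $(c,d)$, pick local (étale or formal) coordinates $x$ at $c$ and $y$ at $d$ in which $\sigma$ acts by $x \mapsto -x$ and $y \mapsto -y$. This is possible since $\sigma$ is a nontrivial involution fixing a point of a smooth curve in characteristic zero. The local coordinate on $\overline{C}$ at $\overline{c}$ is then $s = x^2$. The image of $(c,d)$ in $X$ is an $A_1$-singularity with local invariants $u = x^2$, $v = xy$, $w = y^2$, and the function $(\pr_1 \circ f)^* s$ equals $u = x^2$.

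To compute on $\widetilde{X}$, I would use that the blowup of the $A_1$ point is the quotient by the involution of the blowup of $C \times C$ at $(c,d)$. In the chart with coordinates $(x, t = y/x)$, the involution acts by $(x,t) \mapsto (-x,t)$. The quotient is smooth with coordinates $(a,t)$, where $a = x^2$, and the exceptional curve $E$ is the reduced divisor $\{a = 0\}$. Since $(\pr_1 \circ f)^* s = x^2 = a$, the pullback vanishes to order exactly $1$ along $E$.

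Next I check that $E$ lies in the fiber over $\overline{c}$, i.e.\ is a component of $(\pr_1 \circ f)^*\{\overline{c}\}$. This holds because $E$ maps to the point $(\overline{c},\overline{d})$, so it is contracted to the divisor $\{\overline{c}\}$ and hence "dominates" it. Therefore $E$ appears with coefficient $1$, and $m(\pr_1 \circ f, \{\overline{c}\}) = 1$. It follows that $\Delta_{\pr_1 \circ f} = 0$.

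The main obstacle is making the local computation rigorous: justifying that the minimal resolution is the quotient of the blowup of $C \times C$, and that the involution can be linearized in étale-local coordinates. The cleanest route is probably to note that both the orbifold base multiplicity and the resolution are compatible with étale (or completed) base change. The computation then reduces to the model $\mathbb{A}^2/\pm 1$.
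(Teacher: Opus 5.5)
Your argument is correct, but it takes a different route from the paper's.

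\textbf{The paper's route.} The paper never computes a fiber multiplicity. It picks a fixed point $c_0$ of $\sigma$ and notes that $c \mapsto (c, c_0)$ is equivariant, so it descends to a map $\overline{C} \to X$. By properness of $\widetilde{X} \to X$ this lifts to a section $\overline{C} \to \widetilde{X}$ of $\pr_1 \circ f$. It then uses the general fact that if a morphism from a smooth variety to a curve has a section, every fiber has a reduced component: pull back a uniformizer along the section, using that the local rings of $\widetilde{X}$ are factorial.

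\textbf{Your route.} You identify the minimal resolution locally with $\mathrm{Bl}_0\mathbb{A}^2/\pm 1$. You then compute directly that the exceptional curve $E$ over $(\overline{c},\overline{d})$ occurs with coefficient $1$ in $(\pr_1\circ f)^*\overline{c}$. Equivalently, the pullback of a uniformizer at $\overline{c}$ vanishes to order $2$ along the exceptional curve of the blowup of $C\times C$, and the quotient map is ramified of index $2$ there. The identification you flag as the main obstacle is standard. The lifted involution on the blowup of $C\times C$ at the fixed points has fixed locus exactly the exceptional curves. The quotient is therefore smooth, with $(-2)$-curves over the singular points, and so it is the minimal resolution.

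\textbf{What each approach buys.} The paper's argument is coordinate-free. It needs neither the linearization of $\sigma$ nor any description of the resolution, and it works for an arbitrary smooth resolution of $X$. That is exactly why it transfers verbatim to Example~\ref{threefold_to_surface:example}, where $\widetilde{X}$ is an arbitrary resolution and your chart computation would not directly apply. Your computation gives more information in return. It shows which fiber component is reduced: the exceptional curve $E$, which is also where the paper's section meets the fiber for $\overline{c}\in\Delta$. It also shows that the strict transform of the image of $\{c\}\times C$ has multiplicity $2$. This makes transparent why $f$, which contracts $E$ to a point of codimension two, has the nontrivial orbifold base of Lemma~\ref{surface_to_surface:orbifold_base1}, while $\pr_1\circ f$ does not.
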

\begin{proof}
If $c_0 \in C$ is a fixpoint of the involution $\sigma$, the morphism $C \to C \times C$ given by $c \mapsto (c, c_0)$ is equivariant for the involutions $\sigma$ and $(\sigma, \sigma)$, respectively.
Thus, it descends to the respective quotients and we obtain a morphism $\overline{C} \to X$.
Since $\overline{C}$ is a smooth curve and $\widetilde{X} \to X$ is proper, this morphism lifts to give a morphism $\overline{C} \to \widetilde{X}$.
By construction, this yields a section of $\pr_1 \circ f$.
Since $\widetilde{X}$ is smooth, the existence of a section implies that every fiber of $\pr_1 \circ f$ has a reduced component; hence we are done. 
\end{proof}

From these two facts, it now follows almost formally that $\widetilde{X} \to (\overline{C} \times \overline{C}, \Delta_f)$ cannot be a morphism of C-pairs.
However, for a clean statement of the argument, it will be convenient to extend our notion of morphisms into C-pairs to also cover morphisms between C-pairs.
To do so, let $(X, \Delta_X)$ and $(Y, \Delta_Y)$ be two C-pairs, with $\Delta_X = \sum_i (1-\frac{1}{m_i}) E_i$ and $\Delta_Y = \sum_j (1-\frac{1}{m_j'}) E_j'$.
Then a morphism $f \colon X \to Y$ is said to be a \emph{morphism of C-pairs} $(X, \Delta_X) \to (Y, \Delta_Y)$ if $f^{-1}(\floor{\Delta_Y}) \subseteq \floor{\Delta_X}$ and for every $j$, we either have that $f$ factors over $E_j'$ or that for every prime divisor $D \subseteq X$ appearing in $f^*E_j'$, the coefficient of $D$ in $f^*E_j'$ is at least $\frac{m_j'}{m_i}$ if $D = E_i$ for some $i$ and at least $m_j'$ otherwise.
Note that a morphism of C-pairs $(X,0) \to (Y, \Delta_Y)$ is just a morphism $X \to (Y, \Delta_Y)$ as defined before.
Note furthermore that this notion is stable under composition:
Given C-pair morphisms $(X, \Delta_X) \to (Y, \Delta_Y)$ and $(Y, \Delta_Y) \to (Z, \Delta_Z)$, the composition $X \to Z$ defines a C-pair morphism $(X, \Delta_X) \to (Z, \Delta_Z)$.

\begin{corollary}
In the situation of Example~\ref{surface_to_surface:example}, the morphism $f$ does not give a C-pair morphism $\widetilde{X} \to (\overline{C} \times \overline{C}, \Delta_f)$.
\end{corollary}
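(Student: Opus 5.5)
We argue by contradiction, using that C-pair morphisms compose. Suppose $f$ is a C-pair morphism $\widetilde{X} \to (\overline{C} \times \overline{C}, \Delta_f)$. We will check that $\pr_1$ is a C-pair morphism
\[
(\overline{C} \times \overline{C}, \Delta_f) \to \left(\overline{C}, \tfrac{1}{2}\Delta\right),
\]
where $\Delta$ is the branch locus, viewed as a reduced divisor on $\overline{C}$. It is nonempty because $\sigma$ has at least one fixpoint. Composing then gives a C-pair morphism $\widetilde{X} \to (\overline{C}, \frac{1}{2}\Delta)$. As noted in the introduction, any dominant C-pair morphism $X \to (Y,\Delta_Y)$ satisfies $\Delta_Y \leq \Delta_g$, where $g$ is the underlying morphism. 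Hence $\frac{1}{2}\Delta \leq \Delta_{\pr_1 \circ f}$. But Lemma~\ref{surface_to_surface:orbifold_base2} says $\Delta_{\pr_1\circ f} = 0$, a contradiction.

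\textbf{Checking that $\pr_1$ is a C-pair morphism.} By Lemma~\ref{surface_to_surface:orbifold_base1}, $\Delta_f = \frac{1}{2}(\Delta \times \overline{C} + \overline{C} \times \Delta)$. Neither side has reduced components, so the floor condition is vacuous. Fix a point $\overline{c} \in \Delta$. Then
\[
\pr_1^*[\overline{c}] = \{\overline{c}\} \times \overline{C},
\]
with coefficient $1$, and this divisor is a component $E_i$ of $\Delta_f$ with $m_i = 2$. The required bound is $m_j'/m_i = 2/2 = 1$, so the condition holds. No other prime divisor appears in the pullback. Also, $\pr_1$ does not factor through any point, so the alternative case in the definition does not arise.

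\textbf{The inequality $\Delta_Y \le \Delta_g$.} This was stated as easy in the introduction, but it is the crux, so we spell it out for $g = \pr_1 \circ f$ and $D = [\overline{c}]$. If $g$ were a C-pair morphism to $(\overline{C}, \frac{1}{2}\Delta)$, every component of $g^*[\overline{c}]$ would have coefficient at least $2$. In particular this holds for the components dominating $[\overline{c}]$, which is all components since $[\overline{c}]$ is a point. So $m(g,[\overline{c}]) \geq 2$.

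This contradicts the section produced in Lemma~\ref{surface_to_surface:orbifold_base2}. A section through a smooth point of the fiber forces a component of multiplicity $1$, since pulling the fiber back along the section gives $[\overline{c}]$ with coefficient $1$, while it would otherwise be at least $2$. I expect no real obstacle; the only care needed is applying the definition of morphisms between C-pairs correctly to the component $\{\overline{c}\}\times\overline{C}$.
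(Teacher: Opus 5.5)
Your proposal is correct and follows the paper's proof: you use Lemma~\ref{surface_to_surface:orbifold_base1} to show that $\pr_1$ is a C-pair morphism $(\overline{C}\times\overline{C},\Delta_f)\to(\overline{C},\frac{1}{2}\Delta)$, compose it with $f$, and contradict $\Delta_{\pr_1\circ f}=0$ from Lemma~\ref{surface_to_surface:orbifold_base2}. The only difference is that you spell out checks the paper leaves implicit: the coefficient bound $m_j'/m_i=1$ along $\{\overline{c}\}\times\overline{C}$, the inequality $\Delta_Y\le\Delta_g$ for the dominant map $\pr_1\circ f$, and the nonemptiness of $\Delta$ coming from the fixpoint of $\sigma$.
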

\begin{proof}
Let $\Delta \subseteq \overline{C}$ denote the branch locus of the quotient morphism $C \to \overline{C}$.
Then it follows from Lemma~\ref{surface_to_surface:orbifold_base1} that projection onto the first component defines a C-pair morphism $(\overline{C} \times \overline{C}, \Delta_f) \to (\overline{C}, \frac{1}{2} \Delta)$.
Consequently, if $f \colon \widetilde{X} \to (\overline{C} \times \overline{C}, \Delta_f)$ were to be a C-pair morphism, the composition $\pr_1 \circ f$ would be a C-pair morphism $\widetilde{X} \to (\overline{C}, \frac{1}{2} \Delta)$.
But this in turn would force the orbifold base of $\pr_1 \circ f$ to be nontrivial, contradicting Lemma~\ref{surface_to_surface:orbifold_base2}.
\end{proof}

The second example is closely related to the first, but this time, we have a smooth threefold mapping to a smooth surface and the map between them has connected fibers. 

\begin{example} \label{threefold_to_surface:example}
Let $C$ be a smooth quasi-projective curve, let $\sigma \colon C \to C$ be an involution with at least one fixpoint and let $\overline{C}$ be the corresponding (scheme-theoretic) quotient of $C$.
Let $D$ be a smooth quasi-projective curve and let $\tau \colon D \to D$ be a fixpoint free involution.
Consider the threefold $C \times C \times D$ and its automorphisms $(\sigma, \sigma, \id_D)$ and $(\id_C, \sigma, \tau)$.
They generate a group of order four (isomorphic to the Klein four group), which we call $G$.
Let $X$ be the quotient $(C \times C \times D)/G$.
Let $\widetilde{X} \to X$ be a resolution of singularities which is an isomorphism over the regular locus of $X$.
The projection map $C \times C \times D \to C \times C$ induces a morphism $X \to \overline{C} \times \overline{C}$ and composing it yields a morphism $f \colon \widetilde{X} \to \overline{C} \times \overline{C}$.
\end{example}

The proof that the morphism $f$ indeed is the desired example is very similar to the one given in Lemmas~\ref{surface_to_surface:orbifold_base1} and \ref{surface_to_surface:orbifold_base2}.

\begin{lemma} \label{threefold_to_surface:orbifold_base1}
In the situation of Example~\ref{threefold_to_surface:example}, the orbifold base divisor $\Delta_f$ of $f$ is given by $\frac{1}{2}(\Delta \times \overline{C} + \overline{C} \times \Delta)$, where $\Delta \subseteq \overline{C}$ is the branch locus of the quotient map $C \to \overline{C}$.
\end{lemma}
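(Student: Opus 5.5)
The plan is to follow the proof of Lemma~\ref{surface_to_surface:orbifold_base1}: first locate the singular locus of $X$, then compute the multiplicities over the generic points of the candidate divisors, where $\widetilde{X} \to X$ is an isomorphism. Note that $G$ also contains the product $(\sigma, \id_C, \tau)$. Since the map to $C \times C$ is equivariant, the image of $G$ in $\mathrm{Aut}(C \times C)$ is the full group $\langle \sigma \rangle \times \langle \sigma \rangle$, so $X \to \overline{C} \times \overline{C}$ is indeed induced. Swapping the two $C$-factors (and keeping $D$) normalizes $G$: it exchanges $(\id_C,\sigma,\tau)$ and $(\sigma,\id_C,\tau)$ and fixes $(\sigma,\sigma,\id_D)$. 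Hence everything is symmetric in the two factors of $\overline{C} \times \overline{C}$, and it suffices to treat divisors of the form $\{\overline{c}\} \times \overline{C}$.

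\emph{Step 1: stabilizers.} Since $\tau$ is fixpoint free, the elements $(\id_C,\sigma,\tau)$ and $(\sigma,\id_C,\tau)$ act freely on $C \times C \times D$. The element $(\sigma,\sigma,\id_D)$ fixes exactly the points $(p,q,d)$ with $p,q$ fixed points of $\sigma$. Hence $G$ acts freely outside the preimage of $\Delta \times \Delta$, so $X$ is smooth there. Because $\widetilde{X} \to X$ is an isomorphism over the regular locus, $\widetilde{X} \to X$ is an isomorphism over the complement of the finite set $\Delta \times \Delta \subseteq \overline{C}\times\overline{C}$.

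\emph{Step 2: support.} Over $(\overline{C}\setminus\Delta)^2$, the composite $C\times C\times D \to X \to \overline{C}\times\overline{C}$ is smooth, and $C \times C \times D \to X$ is étale there. Hence $f$ is smooth over this open set, so all fibers over its prime divisors are reduced. Therefore $\Delta_f$ is supported on $\Delta \times \overline{C} + \overline{C} \times \Delta$.

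\emph{Step 3: multiplicity two.} Fix $\overline{c} \in \Delta$ with preimage the fixed point $c \in C$. Components of $f^*(\{\overline c\}\times\overline C)$ lying over the finite set $\Delta\times\Delta$ do not dominate, so they are discarded by the definition. As in Lemma~\ref{surface_to_surface:orbifold_base1}, we may restrict to the preimage of $\overline{C} \times (\overline{C}\setminus\Delta)$. By Step 1, this preimage is isomorphic to the corresponding open subset $U$ of $X$, and $V \to U$ is finite étale, where $V$ is the preimage of $U$ in $C\times C\times D$. The pullback of $\{\overline c\}\times\overline C$ to $V$ is $2\,(\{c\}\times C^\circ \times D)$, where $C^\circ$ is the preimage of $\overline{C}\setminus\Delta$; this holds because $C \to \overline C$ is ramified of index $2$ at $c$. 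The reduced divisor $\{c\}\times C^\circ\times D$ is $G$-stable and has irreducible image $E \subseteq U$. Since $V \to U$ is étale, multiplicities of divisors are preserved, so $f^*(\{\overline c\}\times\overline C)|_U = 2E$ with $E$ prime and dominating. Thus $m(f,\{\overline c\}\times \overline C)=2$, and by symmetry the same holds for $\overline{C}\times\{\overline c\}$, giving the stated formula.

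I expect the main obstacle to be Step 1, together with the bookkeeping it enables. One must check carefully that no element of $G$ other than $(\sigma,\sigma,\id_D)$ has fixed points; this is exactly where the freeness of $\tau$ enters. One must then deduce that the resolution is an isomorphism over everything except finitely many points of the base. Only this allows the exceptional divisors to be ignored, since they lie over a codimension two subset. Once this is in place, Steps 2 and 3 reduce to the local ramification of $C \to \overline C$.
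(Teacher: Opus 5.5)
Your proposal is correct and takes essentially the same route as the paper. The paper also shows that only $(\sigma,\sigma,\id_D)$ has fixed points (using that $\tau$ is fixpoint free), so the singular locus lies over the finite set $\Delta \times \Delta$. It then gets reducedness away from $\Delta \times \overline{C} + \overline{C} \times \Delta$, transfers multiplicity two from $C \times C \times D$ to $X$ via the étale quotient map, and concludes by symmetry. Your divisor-level computation $2(\{c\} \times C^\circ \times D)$ and your justification of the symmetry (the factor swap normalizes $G$) are slightly more careful than the paper's fiberwise phrasing, but the argument is the same.
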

\begin{proof}
We first prove that the image of the singular locus of $X$ in $\overline{C} \times \overline{C}$ is contained in $\Delta \times \Delta$ and is thus, in particular finite.
To do so, note that the singular locus of $X$ lies under the points of $C \times C \times D$ with nontrivial stabilizer.
Since the involution $\tau$ is assumed to have no fixed points, the only element of $G$ with fixed points is $(\sigma, \sigma, \id_D)$.
The image of the fixed points of $(\sigma, \sigma, \id_D)$ in $\overline{C} \times \overline{C}$ is given by $\Delta \times \Delta$ and the claim follows.

Given a point $(\overline{c_1}, \overline{c_2}) \in \overline{C} \times \overline{C}$ with neither of the $\overline{c_i}$ contained in $\Delta$, the preimage of $(\overline{c_1}, \overline{c_2})$ in $C \times C \times D$ consists of four copies of $D$ which are interchanged by the action of the group $G$.
In particular, it is reduced.
Since the map $C \times C \times D \to \overline{C} \times \overline{C}$ can be factored as $C \times C \times D \to X \to \overline{C} \times \overline{C}$, this forces the fiber of $X \to \overline{C} \times \overline{C}$ over $(\overline{c_1}, \overline{c_2})$ to be reduced, too.
Thus, $(\overline{c_1}, \overline{c_2})$ is not contained in the support of $\Delta_f$, so that $\Delta_f$ is supported on $(\Delta \times \overline{C} + \overline{C} \times \Delta)$.

On the other hand, given a point $(\overline{c_1}, \overline{c_2}) \in \Delta \times (\overline{C} \setminus \Delta)$, the preimage of $(\overline{c_1}, \overline{c_2})$ in $C \times C$ consists of two nonreduced points of multiplicity two, which forces the preimage of $(\overline{c_1}, \overline{c_2})$ in $C \times C \times D$ to be nonreduced of multiplicity two, as well.
Since the morphism $C \times C \times D \to X$ is finite étale at points mapping to $(\overline{c_1}, \overline{c_2})$, the fiber of $X \to \overline{C} \times \overline{C}$ over $(\overline{c_1}, \overline{c_2})$ must also be nonreduced of multiplicity two; this shows that $\Delta_f$ contains the term $\frac{1}{2}(\Delta \times \overline{C}$.
By symmetry, $\Delta_f$ also contains the term $\frac{1}{2}(\overline{C} \times \Delta)$; this proves the lemma.
\end{proof}

The analysis done in the previous proof allows one to also quickly deduce the following.

\begin{lemma} \label{threefold_to_surface:connected_fibers}
In the situation of Example~\ref{threefold_to_surface:example}, the morphism $f$ has connected fibers.
\end{lemma}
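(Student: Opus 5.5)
We want to show that $f \colon \widetilde{X} \to \overline{C} \times \overline{C}$ has connected fibers. The plan is to reduce to the morphism $g \colon X \to \overline{C}\times\overline{C}$ and then check its fibers pointwise using the explicit description of the $G$-orbits.

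First we reduce to $X$. The resolution $\widetilde{X} \to X$ is proper and birational onto the normal variety $X$. By Zariski's main theorem its fibers are connected, and it is an isomorphism over the regular locus. A fiber $f^{-1}(y)$ is the full preimage of $g^{-1}(y)$ under $\widetilde{X} \to X$. Since that map is proper, surjective and has connected fibers, the preimage of a connected closed set is connected: it is closed, and any splitting into two disjoint closed pieces would have to separate whole fibers, hence would push down to a splitting of $g^{-1}(y)$. So it suffices to show that every fiber of $g$ is connected, as a set.

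Next, for $y = (\overline{c_1}, \overline{c_2})$, the fiber $g^{-1}(y)$ is the image of the preimage of $y$ in $C \times C \times D$, which is $\pi^{-1}(\overline{c_1}) \times \pi^{-1}(\overline{c_2}) \times D$. Here $\pi \colon C \to \overline{C}$ denotes the quotient map. This preimage is a disjoint union of at most four copies $\{(c_1', c_2')\} \times D$, indexed by pairs of preimages. The images of two such copies in $X$ coincide as soon as some element of $G$ maps one onto the other.

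It remains to check that $G$ acts transitively on the set $\pi^{-1}(\overline{c_1}) \times \pi^{-1}(\overline{c_2})$, since then $g^{-1}(y)$ is the image of a single copy of $D$.
\begin{itemize}
\item Write $\pi^{-1}(\overline{c_i}) = \{c_i, \sigma(c_i)\}$, where possibly $c_i = \sigma(c_i)$.
\item The element $(\sigma,\sigma,\id_D)$ sends $(c_1,c_2)$ to $(\sigma c_1, \sigma c_2)$.
\item The element $(\id_C,\sigma,\tau)$ sends $(c_1,c_2)$ to $(c_1, \sigma c_2)$.
\item Their product sends $(c_1,c_2)$ to $(\sigma c_1, c_2)$.
\end{itemize}
Hence all pairs are in one orbit. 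The image of a copy of $D$ is irreducible, hence connected, so $g^{-1}(y)$ is connected. This holds in every case, including $y \in \Delta\times\Delta$.

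I expect no real obstacle. The one point needing care is the first step: making sure the connectedness of fibers of $\widetilde{X}\to X$ is correctly justified via Zariski's main theorem using normality of the quotient $X$, and that it transfers to connectedness of the full preimage of a connected set.
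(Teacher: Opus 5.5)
Your proof is correct, but it takes a different route from the paper's.

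\textbf{The paper's route.} It only examines the \emph{general} fiber of $X \to \overline{C}\times\overline{C}$ (four copies of $D$ permuted transitively by $G$). It transports this to $f$ using two facts: $\widetilde{X}\to X$ is an isomorphism over the regular locus, and the singular locus of $X$ lies over the finite set $\Delta\times\Delta$. It then deduces connectedness of \emph{all} fibers from Zariski's connectedness theorem (via Stein factorization), applied to the proper map $f$ onto the smooth surface $\overline{C}\times\overline{C}$.

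\textbf{Your route.} You put the Zariski-type input on the resolution $\widetilde{X}\to X$ instead, using normality of the quotient $X$. You then check connectedness of \emph{every} fiber of $X\to\overline{C}\times\overline{C}$ directly, noting that the transitivity computation on $\pi^{-1}(\overline{c_1})\times\pi^{-1}(\overline{c_2})$ works even when $\overline{c_1}$ or $\overline{c_2}$ lies in $\Delta$.

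\textbf{What each buys.} The paper's argument needs only the general fiber, but it requires $f$ to be proper. That holds exactly when $D$ is complete, which is the case relevant to Theorem~\ref{mainthm}, whereas Example~\ref{threefold_to_surface:example} itself only assumes quasi-projective curves. Your argument uses only properness of the resolution, so it covers the full generality of the Example. It also does not need the localization of the singular locus over $\Delta\times\Delta$. As a bonus, it shows that each fiber of $X\to\overline{C}\times\overline{C}$ is irreducible, being the image of a single copy of $D$.
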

\begin{proof}
As already remarked in the proof of Lemma~\ref{threefold_to_surface:orbifold_base1}, the preimage of a general point $(\overline{c_1}, \overline{c_2}) \in \overline{C} \times \overline{C}$ in $C \times C \times D$ is given by four copies of $D$, and the action of the group $G$ is transitive on the set of connected components of this preimage.
In particular, passing to the quotient $X = (C \times C \times D)/G$ leaves us with one copy of $D$, which is clearly connected.
Thus, the general fiber of $X \to \overline{C} \times \overline{C}$ is connected.
Since $f$ and $X \to \overline{C} \times \overline{C}$ have the same fibers over the open subset $(\overline{C} \times \overline{C}) \setminus (\Delta \times \Delta)$, we see that the general fiber of $f$ is connected as well.
As $f$ is proper and $X$ is connected, this forces every fiber of $f$ to be connected, as desired.
\end{proof}

\begin{lemma} \label{threefold_to_surface:orbifold_base2}
In the situation of Example~\ref{threefold_to_surface:example}, consider the projection onto the first factor $\pr_1 \colon \overline{C} \times \overline{C} \to \overline{C}$. 
Then the orbifold base of the composition $\pr_1 \circ f \colon \widetilde{X} \to \overline{C}$ is trivial.
\end{lemma}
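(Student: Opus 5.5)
Following the proof of Lemma~\ref{surface_to_surface:orbifold_base2}, we construct a section of $\pr_1 \circ f \colon \widetilde{X} \to \overline{C}$. Since $\widetilde{X}$ is smooth, a section forces every fiber to have a reduced component. Indeed, if $s$ is a section and $\overline{c} \in \overline{C}$ with local parameter $t$, then $(\pr_1\circ f)^*t$ pulls back along $s$ to $t$. So $(\pr_1\circ f)^*t$ vanishes to order exactly one at the point $s(\overline{c})$, which is a smooth point of $\widetilde{X}$. Hence some component of the fiber through $s(\overline{c})$ has multiplicity one, and it dominates the point $\overline{c}$, so $m(\pr_1\circ f, \overline{c}) = 1$ for all $\overline{c}$.

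To build the section, fix a fixpoint $c_0 \in C$ of $\sigma$ and any point $d_0 \in D$. We would like a map $C \to C \times C \times D$ that is equivariant for $\sigma$ on $C$ and a suitable element of $G$ acting on the target, namely $c \mapsto (c, c_0, d_0)$. Under $(\sigma,\sigma,\id_D)$ this point goes to $(\sigma(c), c_0, d_0)$, the image of $\sigma(c)$. So the map is equivariant for the subgroup $\langle \sigma\rangle \cong \langle(\sigma,\sigma,\id_D)\rangle$, and the composition $C \to C\times C\times D \to X$ is $\sigma$-invariant. It therefore descends to a morphism $\overline{C} \to X$, using that $\overline{C} = C/\sigma$ is a categorical quotient. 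The composite $\overline{C} \to X \to \overline{C}\times\overline{C} \to \overline{C}$ sends the class of $c$ to the class of $c$, so it is the identity.

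Finally, lift $\overline{C} \to X$ to $\widetilde{X}$. The curve $\overline{C}$ is smooth and the resolution $\widetilde{X} \to X$ is proper and birational, so the rational map $\overline{C} \ratmap \widetilde{X}$ extends by the valuative criterion. This needs the image of $\overline{C}$ not to lie entirely in the singular locus of $X$, over which $\widetilde{X} \to X$ fails to be an isomorphism. By the proof of Lemma~\ref{threefold_to_surface:orbifold_base1}, the singular locus maps into the finite set $\Delta\times\Delta$, whereas our curve maps onto $\overline{C}\times\{\overline{c_0}\}$. Hence the image is generically in the regular locus, and the lift exists. It is still a section, since the identity $\pr_1\circ f\circ s = \id$ holds on a dense open subset and both sides are morphisms to a separated variety.

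The main point to verify carefully is the descent step. The section only needs invariance under the one element $(\sigma,\sigma,\id_D)$ of $G$, and it works because $c_0$ is fixed. Note that the element $(\id_C,\sigma,\tau)$ plays no role here. With the section in hand, the reduced-component argument from Lemma~\ref{surface_to_surface:orbifold_base2} applies verbatim and gives $\Delta_{\pr_1\circ f} = 0$.
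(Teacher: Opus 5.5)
Your proof is correct and follows the paper's route. You use the same section $c \mapsto (c, c_0, d_0)$, descend it to $\overline{C} \to X$, lift it to $\widetilde{X}$, and conclude with the reduced-component argument from Lemma~\ref{surface_to_surface:orbifold_base2}. The differences are minor: you get the descent directly from $(\sigma,\sigma,\id_D)$-invariance of $C \to X$ rather than via the intermediate quotient $X'$ by $(\id_C,\sigma,\tau)$. You also make explicit two points the paper leaves implicit: why the lift to $\widetilde{X}$ exists (the image of the section meets the regular locus), and why a section forces a multiplicity-one component in every fiber.
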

\begin{proof}
As in the proof of Lemma~\ref{surface_to_surface:orbifold_base2}, it suffices to construct a section of the morphism $X \to \overline{C} \times \overline{C} \to \overline{C}$.
Let $X'$ be the quotient of $C \times C \times D$ by the involution $(\id_C, \sigma, \tau)$.
The involution $(\sigma, \sigma, \id_D)$ of $C \times C \times D$ commutes with $(\id_C, \sigma, \tau)$, and thus it descends to an involution $[\sigma, \sigma, \id_D]$ on $X'$.
Consequently, the quotient morphism $C \times C \times D \to X$ factors over $X'$.
Now let $c_0 \in C$ be a fixed point of $\sigma$ and let $d_0 \in D$ be any point.
Then the morphism $C \to C \times C \times D$ given by $c \mapsto (c, c_0, d_0)$ gives a morphism $C \to X'$ which is equivariant for the involutions $\sigma$ and $[\sigma, \sigma, \id_D]$ respectively.
Thus, it passes to the respective quotients and gives the desired section $\overline{C} \to X$.
\end{proof}

\begin{corollary}
In the situation of Example~\ref{threefold_to_surface:example}, the morphism $f$ does not give a C-pair morphism $\widetilde{X} \to (\overline{C} \times \overline{C}, \Delta_f)$.
\end{corollary}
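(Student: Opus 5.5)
The argument is the same as for the corresponding corollary in the surface case (Example~\ref{surface_to_surface:example}), with the lemmas about Example~\ref{threefold_to_surface:example} substituted. We argue by contradiction: suppose $f \colon \widetilde{X} \to (\overline{C} \times \overline{C}, \Delta_f)$ is a C-pair morphism, and let $\Delta \subseteq \overline{C}$ be the branch locus of $C \to \overline{C}$. Since $\sigma$ has a fixpoint, $\Delta$ is nonempty.

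First, by Lemma~\ref{threefold_to_surface:orbifold_base1} we have $\Delta_f = \frac{1}{2}(\Delta \times \overline{C} + \overline{C} \times \Delta)$. We check directly that $\pr_1$ is a C-pair morphism $(\overline{C} \times \overline{C}, \Delta_f) \to (\overline{C}, \frac{1}{2}\Delta)$. Neither boundary has integral part, so the condition $\pr_1^{-1}(\floor{\tfrac12 \Delta}) \subseteq \floor{\Delta_f}$ is vacuous. For a point $\overline{c} \in \Delta$, the pullback $\pr_1^*[\overline{c}]$ is the reduced divisor $\{\overline{c}\} \times \overline{C}$. This divisor is a component of $\Delta_f$ with multiplicity $2$, so the required coefficient is $2/2 = 1$, which holds.

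Since C-pair morphisms are stable under composition, $\pr_1 \circ f$ is then a C-pair morphism $\widetilde{X} \to (\overline{C}, \frac{1}{2}\Delta)$. It is dominant, because $f$ is surjective: $f$ is proper, and $X \to \overline{C}\times\overline{C}$ is surjective. The introduction observes that a dominant C-pair morphism $X \to (Y,\Delta_Y)$ satisfies $\Delta_Y \leq \Delta_{X \to Y}$. Hence $\frac{1}{2}\Delta \leq \Delta_{\pr_1 \circ f}$, so $\Delta_{\pr_1 \circ f} \neq 0$.

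This contradicts Lemma~\ref{threefold_to_surface:orbifold_base2}, which says $\Delta_{\pr_1 \circ f} = 0$, and the corollary follows. There is no real obstacle. The only points to check are that $\Delta$ is nonempty and that $f$ is dominant, so that the inequality $\Delta_Y \le \Delta_f$ applies. If one prefers to verify this inequality directly, take a point $\overline{c} \in \Delta$. Then every component of $(\pr_1\circ f)^*[\overline{c}]$ has coefficient at least $2$, and at least one such component dominates $[\overline{c}]$ by surjectivity, so $m(\pr_1\circ f,[\overline{c}]) \geq 2$.
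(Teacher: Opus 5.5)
Your proof is correct and is essentially the paper's argument. By Lemma~\ref{threefold_to_surface:orbifold_base1}, $\pr_1$ is a C-pair morphism $(\overline{C}\times\overline{C},\Delta_f)\to(\overline{C},\frac{1}{2}\Delta)$; composing makes $\pr_1\circ f$ a C-pair morphism to $(\overline{C},\frac{1}{2}\Delta)$, which forces $\Delta_{\pr_1\circ f}\neq 0$ and contradicts Lemma~\ref{threefold_to_surface:orbifold_base2}. You simply make explicit the checks the paper leaves implicit ($\Delta\neq\emptyset$, the multiplicity condition for $\pr_1$, dominance). One tiny slip: surjectivity of $f$ comes from $\widetilde{X}\to X$ being proper birational and $X\to\overline{C}\times\overline{C}$ being surjective, not from properness of $f$ itself, which can fail if $D$ is not projective; in any case only dominance is needed.
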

\begin{proof}
Let $\Delta \subseteq \overline{C}$ denote the branch locus of the quotient morphism $C \to \overline{C}$.
Then it follows from Lemma~\ref{threefold_to_surface:orbifold_base1} that projection onto the first component defines a C-pair morphism $(\overline{C} \times \overline{C}, \Delta_f) \to (\overline{C}, \frac{1}{2} \Delta)$.
Consequently, if $f \colon \widetilde{X} \to (\overline{C} \times \overline{C}, \Delta_f)$ were to be a C-pair morphism, the composition $\pr_1 \circ f$ would be a C-pair morphism $\widetilde{X} \to (\overline{C}, \frac{1}{2} \Delta)$.
But this in turn would force the orbifold base of $\pr_1 \circ f$ to be nontrivial, contradicting Lemma~\ref{threefold_to_surface:orbifold_base2}.
\end{proof}

\section{Neat morphisms} \label{sect:neatness}

In this section, we discuss how neatness helps with making sure that the morphism to the orbifold base is a C-pair morphism, thereby proving Theorem~\ref{mainthmb}.
To formulate our argument, we follow Campana \cite[Section~2.5]{CampanaJussieu} and introduce the sheaf of symmetric differential $1$-forms attached to a smooth C-pair.
Here, we say that a C-pair $(X, \Delta)$ is \emph{smooth} if $\supp \Delta$ is an snc divisor.
If $X$ is a smooth variety and $D \subseteq X$ is a reduced snc divisor, we write $\Omega^1_X(\log D)$ for the sheaf of $1$-forms with at most logarithmic poles along $D$; see \cite[§11.1]{IitakaBook} for its definitions and its basic properties.

\begin{definition} \label{sym_diffs}
Let $(X, \Delta)$ be a smooth C-pair and let $n \geq 1$ be an integer.
Then we define the sheaf of symmetric $1$-forms on $(X, \Delta)$ to be a subsheaf $S^n \Omega^1_{(X, \Delta)} \subseteq \Sym^n \Omega^1_X(\log \supp \Delta)$, which is locally around a closed point $x \in X$ generated as follows.
Choose local coordinates $x_1,\ldots,x_d$ around $x$ such that $\supp \Delta$ is given by the equation $x_1x_2\ldots x_r = 0$ for some $r \leq d$ (which might be zero).
Let $m_i$ denote the multiplicity of the divisor $\{ x_i = 0 \}$ in $\Delta$.
Then the local generators take the following form.
\[ x^{\ceil{v/m}} \bigotimes_{i=1}^n \frac{dx_{j_i}}{x_{j_i}} = x^{\floor{-v(1-1/m)}} \bigotimes_{i=1}^n dx_{j_i} \]
Here, $j_1,\ldots,j_n$ are elements of $\{1,2,\ldots,d\}$, the symbol $v$ denotes a $d$-tuple of integers whose $k$-th entry counts how many how the $j_i$ are equal to $k$, the notation $x^{\ceil{v/m}}$ stands for the product $\prod_{k=1}^d x_k^{\ceil{v_k/m_k}}$, and the notation $x^{\floor{-v(1-1/m)}}$ has the analogous meaning.
\end{definition}
 
This sheaf is defined in such a way that it is well-behaved with respect to C-pair morphisms.

\begin{lemma} \label{sym_diffs_pull_back_cpair_morphism}
Let $X$ be a smooth variety, let $(Y, \Delta)$ be a smooth C-pair and let $f \colon X \to (Y, \Delta)$ be a C-pair morphism not factoring over $\supp \Delta$.
Then, for every integer $n \geq 1$, pullback of differential forms induces a morphism of sheaves $f^* S^n \Omega^1_{(Y, \Delta)} \to \Sym^n \Omega^1_X$.
\end{lemma}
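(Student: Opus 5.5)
The statement is local on $X$ and $Y$, so I will fix a closed point $p \in X$ with image $y = f(p)$ and work with local coordinates $y_1,\ldots,y_d$ around $y$ as in Definition~\ref{sym_diffs}, so that $\supp \Delta = \{ y_1 \cdots y_r = 0 \}$ near $y$ and $m_k$ is the multiplicity of $\{y_k = 0\}$. Since $S^n\Omega^1_{(Y,\Delta)}$ is generated near $y$ by the explicit forms
\[ \omega_{j} = y^{\floor{-v(1-1/m)}} \, dy_{j_1} \otimes \cdots \otimes dy_{j_n}, \]
it suffices to show that the pullback $f^*\omega_j$, a priori a rational section of $\Sym^n \Omega^1_X$ (regular away from $f^{-1}(\supp\Delta)$), is in fact regular near $p$. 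Rational pullback makes sense because $f$ does not factor over $\supp \Delta$, and hence over no $\{y_k=0\}$. Regularity on a smooth variety can be checked in codimension one, i.e.\ at the generic point of each prime divisor $E \subseteq X$, by Hartogs. Only divisors $E$ contained in $f^{-1}(\supp \Delta)$ matter, and divisors mapping into $\floor{\Delta}$ do not occur, because $f(X)$ is disjoint from $\floor{\Delta}$. In particular, every $k$ with $y_k$ vanishing along $E$ has $m_k < \infty$.

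The key computation is at the generic point of such an $E$. Choose a local equation $t$ of $E$ in $\mathcal{O}_{X,E}$ and write $f^*y_k = u_k t^{a_k}$ with $u_k$ a unit, where $a_k = \operatorname{ord}_E f^* y_k \geq 0$. The C-pair morphism condition says that $a_k \geq m_k$ whenever $a_k > 0$ (for $k \le r$). Instead of working with $dy_k$, I will write the generator in logarithmic form $y^{\ceil{v/m}} \bigotimes dy_{j_i}/y_{j_i}$, as in Definition~\ref{sym_diffs}. Then
\[ f^*\frac{dy_k}{y_k} = a_k \frac{dt}{t} + \frac{du_k}{u_k}, \]
so each logarithmic factor acquires a pole of order at most $1$ along $E$, and no pole at all if $a_k = 0$. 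The prefactor $f^*y^{\ceil{v/m}}$ vanishes to order $\sum_k a_k \ceil{v_k/m_k}$ along $E$. The total pole order of the tensor factor is at most the number of indices $i$ with $a_{j_i} > 0$, which is $\sum_{k:\,a_k>0} v_k$.

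It therefore suffices to check, for each $k$ with $a_k > 0$, the inequality $a_k \ceil{v_k/m_k} \geq v_k$. This follows from $a_k \geq m_k$ together with $m_k \ceil{v_k/m_k} \geq v_k$. Summing over $k$ gives regularity along $E$. Indices $k > r$ have $m_k = 1$ and $dy_k$ pulls back regularly, so they cause no issue. I expect the only real care needed is in bookkeeping. One must make sure the divisors $E$ not in $f^{-1}(\supp\Delta)$ are harmless, since there everything is regular. One must also check that $t$, $u_k$ and the expansion of $dt$ take place in the DVR $\mathcal{O}_{X,E}$, where $\Omega^1_X$ is free. Finally, the resulting map $f^*S^n\Omega^1_{(Y,\Delta)} \to \Sym^n\Omega^1_X$ is the restriction of the pullback map on rational forms, so it is a well-defined morphism of sheaves, independent of the choice of coordinates.
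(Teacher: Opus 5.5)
Your argument is correct, and it is the ``straightforward calculation'' the paper delegates to \cite[Proposition~2.11.(1)]{CampanaJussieu} and \cite[Lemma~4.3]{CpairKobayashiOchiai}: Hartogs reduces the question to the generic point of a divisor $E$, where $a_k \geq m_k$ gives $a_k\ceil{v_k/m_k} \geq v_k$, mirroring the DVR computation in the proof of Lemma~\ref{sym_diffs_detect}. Computing with the logarithmic form $y^{\ceil{v/m}}\bigotimes_i dy_{j_i}/y_{j_i}$ is the right choice: it is the normalization for which the lemma holds, and the exponent on the right-hand side of the display in Definition~\ref{sym_diffs} should read $\ceil{-v(1-1/m)}$ rather than $\floor{-v(1-1/m)}$ (for $v=1$, $m=2$ the latter would give $dy/y$).
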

\begin{proof}
This boils down to a straightforward calculation; see \cite[Proposition~2.11.(1)]{CampanaJussieu} or \cite[Lemma~4.3]{CpairKobayashiOchiai} for details.
\end{proof}

The sheaf $S^n \Omega^1_{(X, \Delta)}$ however also has the nice property that the converse of the above lemma is true -- namely, the sheaf can be used to \emph{detect} C-pair morphisms.
The following lemma is similar to \cite[Proposition~2.11.(2)]{CampanaJussieu}.

\begin{lemma} \label{sym_diffs_detect}
Let $(Y, \Delta)$ be a smooth C-pair and let $f \colon X \to Y$ be a morphism of smooth varieties.
Suppose that $f$ does not factor over $\supp \Delta$.
Let $N$ be an integer such that $N \Delta$ is a $\mathbb{Z}$-divisor.
Suppose that pullback of symmetric differentials $f^* \Sym^N \Omega^1_Y \to \Sym^N \Omega^1_X$ extends to a morphism $f^* S^N \Omega^1_{(Y, \Delta)} \to \Sym^N \Omega^1_X$.
Then $f$ is a C-pair morphism $X \to (Y, \Delta)$.
\end{lemma}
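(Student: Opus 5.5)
The argument is local, and I will check the defining conditions of a C-pair morphism one prime divisor at a time. Fix a component $E = \{y_1 = 0\}$ of $\supp \Delta$ that $f$ does not factor through, and write its multiplicity as $m \in \mathbb{Z}_{\geq 2} \cup \{\infty\}$. Let $D \subseteq X$ be a prime divisor appearing in $f^*E$ with coefficient $a \geq 1$. I must show $a \geq m$; when $m = \infty$ this means no such $D$ exists, i.e.\ $f(X) \cap \floor{\Delta} = \emptyset$. Choose a general closed point $x \in D$: a smooth point of $\supp f^*\Delta$ lying on no other component of $f^*\supp\Delta$. Take local coordinates $t_1,\ldots,t_e$ on $X$ near $x$ with $D = \{t_1 = 0\}$, and coordinates $y_1,\ldots,y_d$ on $Y$ near $f(x)$ adapted to the snc divisor $\supp\Delta$ as in Definition~\ref{sym_diffs}. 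Then $f^*y_1 = u\,t_1^a$ with $u$ a unit near $x$.

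\textbf{Finite multiplicity.} Consider the local generator of $S^N\Omega^1_{(Y,\Delta)}$ with $j_1 = \cdots = j_N = 1$. Here $v = (N,0,\ldots,0)$, and since $N(1-1/m) = N - N/m$ is an integer, the generator is
\[ y_1^{-N(1-1/m)}\,(dy_1)^{\otimes N} = y_1^{N/m}\Big(\frac{dy_1}{y_1}\Big)^{\otimes N}. \]
Pulling back with $d(f^*y_1) = t_1^{a}(du + a u\, dt_1/t_1)$ gives
\[ u^{N/m}\,t_1^{aN/m}\Big(\frac{du}{u} + a\frac{dt_1}{t_1}\Big)^{\otimes N}. \]
Expanding, the coefficient of $(dt_1)^{\otimes N}$ is $a^N u^{N/m} t_1^{aN/m - N}$ plus terms carrying factors $du/u$ that are regular in $t_1$. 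More carefully, the $(dt_1)^{\otimes N}$ component is exactly $t_1^{aN/m}u^{N/m}(a/t_1 + \partial_{t_1}u/u)^N$, whose pole order along $t_1=0$ is $N - aN/m$ because $a \neq 0$ in characteristic zero. By hypothesis this section lies in $\Sym^N\Omega^1_X$, so it is regular, forcing $aN/m \geq N$, i.e.\ $a \geq m$.

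\textbf{Infinite multiplicity.} Now $m = \infty$, so the generator is $(dy_1/y_1)^{\otimes N}$. Its pullback has $(dt_1)^{\otimes N}$-coefficient $(a/t_1 + \partial_{t_1}u/u)^N$, which has a genuine pole of order $N$ along $D$. This contradicts regularity. Hence no divisor of $X$ maps into $E$, and since $X$ is smooth and $E$ is Cartier, $f^{-1}(E)$ is empty or of pure codimension one, so $f^{-1}(E) = \emptyset$.

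The main obstacle is making the coordinate bookkeeping rigorous: the local frame of $\Sym^N\Omega^1_X$ must be used correctly, so that one isolates the $(dt_1)^{\otimes N}$ coefficient, and the extension hypothesis must be read as saying the pulled-back rational sections are regular. That reading is justified because $f$ does not factor over $\supp\Delta$, so $f^*$ is defined generically and extension is a closed condition checked at the generic point of $D$. This is why I work at a general point of $D$, where only one boundary component is involved.
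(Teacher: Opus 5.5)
Your proposal is correct and follows the paper's proof essentially verbatim. Both test the hypothesis on the section $y^{-N(1-1/m)}(dy)^{\otimes N}$ of $S^N\Omega^1_{(Y,\Delta)}$, then compare the order along the divisor of the $(dt)^{\otimes N}$-coefficient of its pullback to get $a \geq m$. Your write-up is, if anything, slightly more careful than the paper's: it keeps the factor $a$ and the $\partial_{t_1}u$ contribution explicit, and it treats $m=\infty$ separately, which gives disjointness from $\floor{\Delta}$.
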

\begin{proof}
Let $D \subseteq \supp \Delta$ be an irreducible component, let $m$ be its multiplicity in $\Delta$, and let $E \subseteq X$ be a prime divisor appearing in $f^*D$.
We have to show that the coefficient of $E$ in $f^*D$ is at least $m$.
Let $\eta_E$ be the generic point of $E$ and let $\xi := f(\eta_E)$ be its image in $Y$.
To verify the multiplicity condition for $E$, we may replace $Y$ by an open neighborhood of $\xi$ and thus, we may assume that $D$ is a principal divisor.
Let $y$ be a regular function on $Y$ whose vanishing locus is $D$.
Now consider the symmetric differential form $\frac{dy^{\otimes N}}{y^{N(1-1/m)}}$ on $Y$ (our assumption on $N$ ensures that $N(1-1/m)$ is an integer).
It defines a global section of $S^N \Omega^1_{(Y, \Delta)}$.
Thus, by assumption, the pullback $f^* \frac{dy^{\otimes N}}{y^{N(1-1/m)}}$ is regular on $X$.
Choose a uniformizer $t$ of the discrete valuation ring $\mathcal{O}_{X,\eta_E}$ and write $f(y) = ut^k$, with $u \in \mathcal{O}_{X,\eta_E}^{\times}$ and $f(y)$ denoting the image of $y$ under the ring morphism $\mathcal{O}_{Y, \xi} \to \mathcal{O}_{X,\eta_E}$ induced by $f$.
Now we calculate.
\[ f^* \frac{dy^{\otimes N}}{y^{N(1-1/m)}} = \frac{d(ut^k)^{\otimes N}}{(ut^k)^{N(1-1/m)}} = \frac{(ut^{k-1}dt+t^k du)^{\otimes N}}{u^{N(1-1/m)}~t^{kN(1-1/m)}} \]
The unique $dt^{\otimes N}$-term in the numerator has coefficient $u^Nt^{(k-1)N}$, whereas the denominator contains a $t$-power with exponent $kN(1-\frac{1}{m})$.
Thus, to make this expression be a regular form on $X$, we must have $(k-1)N \geq kN(1-\frac{1}{m})$, which is equivalent to $k \geq m$.
This means that the function $f^*y$ vanishes to order $\geq m$ at $\eta_E$, which is exactly what we want.
\end{proof}

In their extension of the theory of C-pairs to singular spaces, Kebekus--Rousseau essentially turn Lemma~\ref{sym_diffs_detect} into a definition; see \cite{KebekusRousseau1} for details.

On the other hand, for sufficiently well-behaved morphisms $f$, the symmetric differentials on the orbifold base $(Y, \Delta_f)$ pull back exactly as one would hope.
(Recall that a dominant morphism $X \to Y$ of smooth varieties is \emph{neat} if there is a proper birational morphism $X \to X'$ with $X'$ smooth such that every divisor contracted by $X \to Y$ is also contracted by $X \to X'$.)

\begin{lemma} \label{sym_diffs_pull_back}
Let $f \colon X \to Y$ be a dominant morphism of smooth varieties.
Assume that $Y$ is quasi-projective, $f$ is neat, and the orbifold base divisor $\Delta_f$ has snc support.
Then, for every integer $n \geq 1$, pullback of symmetric differentials induces a morphism $f^* S^n \Omega^1_{(Y, \Delta_f)} \to \Sym^n \Omega^1_X$.
\end{lemma}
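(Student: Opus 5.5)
Our strategy is to show that the pulled-back forms are regular away from a closed subset of codimension at least two in a smooth model, and then extend them by Hartogs. Neatness supplies a proper birational morphism $\pi \colon X \to X'$ with $X'$ smooth such that every divisor contracted by $f$ is also contracted by $\pi$.

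First we reduce to codimension one. The sheaf $\Sym^n \Omega^1_X$ is locally free and $X$ is normal, so a rational symmetric form on $X$ is regular as soon as it is regular at the generic point of every prime divisor $E \subseteq X$. Fix a local section $\omega$ of $S^n \Omega^1_{(Y,\Delta_f)}$ near a point of $Y$; its pullback $f^*\omega$ is a rational section of $\Sym^n\Omega^1_X$.

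For divisors $E$ that are not contracted by $f$, the image $f(E)$ is either dense in $Y$ or a prime divisor $D$. If $f(E)$ is dense, or $D \not\subseteq \supp\Delta_f$, then $\omega$ is regular near the generic point of $f(E)$, so $f^*\omega$ is regular at $\eta_E$. If $D \subseteq \supp\Delta_f$, then by the definition of $m(f,D)$ the coefficient of $E$ in $f^*D$ is at least $m(f,D)$, which is exactly the multiplicity of $D$ in $\Delta_f$. Near $\eta_E$, i.e.\ over a neighborhood of the generic point of $D$ where $\supp\Delta_f$ is just $D = \{y=0\}$, the local computation behind Lemma~\ref{sym_diffs_pull_back_cpair_morphism} (the reverse of the calculation in Lemma~\ref{sym_diffs_detect}) shows that $f^*\omega$ is regular at $\eta_E$. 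Concretely, $\omega$ is a combination of terms $y^{\lceil v/m\rceil}(dy/y)^{\otimes v}\otimes(\text{regular forms})$, and if $f^*y = ut^k$ with $k\ge m$, then $f^*(dy/y)$ has at most a simple pole along $E$ while $f^*y^{\lceil v/m\rceil}$ vanishes to order $\ge k\lceil v/m\rceil\ge v$. One can argue this directly in the DVR $\mathcal{O}_{X,\eta_E}$, with no snc hypothesis needed at this point.

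The main obstacle is the divisors contracted by $f$, where we have no control on multiplicities. Here neatness does the work: all such $E$ are $\pi$-exceptional. Let $U \subseteq X'$ be the complement of $\pi(\mathrm{Exc}(\pi))$; this has complement of codimension $\ge 2$ since $X'$ is smooth, hence normal, and $\pi$ is birational and proper. Over $U$, $\pi$ is an isomorphism, so $\pi^{-1}(U) \subseteq X$ contains no $f$-contracted divisor, and the previous step shows that $f^*\omega$ is regular on $\pi^{-1}(U)\cong U$. This gives a section of $\Sym^n\Omega^1_{X'}$ on $U$. Since $X'$ is smooth, this bundle is locally free, so the section extends to a regular section $\omega'$ on all of $X'$. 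Then $\pi^*\omega'$ is a regular section of $\Sym^n\Omega^1_X$ that agrees with $f^*\omega$ generically, so $f^*\omega$ is regular.

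Finally, we glue to a sheaf map. The construction is compatible with restrictions, so it defines $f^{-1}S^n\Omega^1_{(Y,\Delta_f)}\to\Sym^n\Omega^1_X$ and hence, by adjunction, $f^*S^n\Omega^1_{(Y,\Delta_f)}\to \Sym^n\Omega^1_X$. Quasi-projectivity of $Y$ and the snc assumption are used only so that $S^n\Omega^1_{(Y,\Delta_f)}$ is defined as in Definition~\ref{sym_diffs} and is generated locally by the explicit forms above.
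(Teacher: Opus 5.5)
Your decomposition is the paper's: the C-pair computation at non-contracted divisors, then Hartogs on $X'$ for the contracted ones. There is, however, a genuine gap at the sentence ``the previous step shows that $f^*\omega$ is regular on $\pi^{-1}(U)$''. Your $\omega$ is a section only over some open $V\subseteq Y$. So the codimension-one analysis controls $f^*\omega$ only at prime divisors whose generic point lies in $f^{-1}(V)$; your claim ``$\omega$ is regular near the generic point of $f(E)$'' silently assumes this. A non-exceptional divisor of $X'$ passing through $\pi(E)$ can map onto a divisor in $Y\setminus V$ along which $\omega$ has a pole. Then $f^*\omega$, viewed on $X'$, has a codimension-one pole through $\pi(E)$, and Hartogs tells you nothing about $E$. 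Your argument is complete for global sections, i.e.\ it gives $\HH^0(Y,S^n\Omega^1_{(Y,\Delta_f)})\to\HH^0(X,\Sym^n\Omega^1_X)$, as the paper also remarks. But Lemma~\ref{sym_diffs_detect} needs the sheaf version, applied to the local form $dy^{\otimes N}/y^{N(1-1/m)}$. Quasi-projectivity is not needed to define $S^n\Omega^1_{(Y,\Delta_f)}$. The paper invokes it exactly at this point: it multiplies $\omega$ by a rational function $\phi$ that is a unit near $f(\eta_E)$ and removes the poles of $\omega$ at the codimension-one points of $Y\setminus V$.

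No repair can work, because the statement appears to be false as written. Let $X''=\mathbb{A}^2_{a,b}$ and $Y=\mathbb{P}^1\times\mathbb{A}^1$ with coordinates $(y,z)$, and let $h=([a^2:b],\,a+b)$, which is defined away from the origin. Blow up the origin (exceptional curve $e_1$), then the point of $e_1$ given by the tangent direction of $\{b=0\}$ (exceptional curve $e_2$); this resolves $h$ to a morphism $f\colon X\to Y$. In the chart $a=\alpha b$ one has $f=(\alpha^2b,\,b(1+\alpha))$, so $e_1=\{b=0\}$ is contracted to $(0,0)$. The curve $e_2$ maps isomorphically onto $\mathbb{P}^1\times\{0\}$, and no curve of $\mathbb{A}^2$ is contracted, so $f$ is neat. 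We have $h^*y=a^2/b$ and $h^*(y+4z)=(a+2b)^2/b$, $h$ is \'etale on $\mathbb{A}^2\setminus\{0\}$ away from $\{a(a+2b)=0\}$, and $e_2$ is reduced over $\mathbb{P}^1\times\{0\}$. From this one gets $\Delta_f=\tfrac12(\{y=0\}+\{y+4z=0\})$, which is snc. Yet $\omega=dy^{\otimes2}/y\in S^2\Omega^1_{(Y,\Delta_f)}(\{y\neq\infty\})$ pulls back to $(2b\,d\alpha+\alpha\,db)^{\otimes2}/b$, whose $db^{\otimes 2}$-coefficient $\alpha^2/b$ has a pole along $e_1$. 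Equivalently, $f^*\{y=0\}=2\widetilde{\{a=0\}}+e_1$, so Theorem~\ref{mainthmb} fails as well. This is exactly your failure mode: the curve $\{b=0\}$ through the origin maps onto $\{y=\infty\}$, where $\omega$ has a pole. The paper's $\phi$ does not help either. Any admissible $\phi$ must vanish along $\{y=\infty\}$ and be a unit at $(0,0)$. Since $f(e_1\cup e_2)=\mathbb{P}^1\times\{0\}$ is a complete curve through $(0,0)$ meeting $\{y=\infty\}$, every such $\phi$ has poles on that curve. These produce poles of $f^*(\phi\omega)$ along curves through the origin of $X''$ whose images lie in the domain of $\omega$. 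The paper's assertion that such a pole forces the divisor to lie over the complement of the domain of $\omega$ overlooks the poles of $\phi$ inside that domain. What survives is the $\HH^0$-statement.
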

\begin{proof}
By assumption, there exists a proper birational morphism $p \colon X \to X'$ with $X'$ smooth such that every prime divisor contracted by $f$ is also contracted by $p$.
Now let $U \subseteq Y$ be an open subset and let $\omega \in S^n \Omega^1_{(Y, \Delta_f)}(U)$ be a section.
We have to show that $f^*\omega$ is a regular symmetric $1$-form on $f^{-1}(U)$.
Since $X$ is smooth, the sheaf $\Sym^n \Omega^1_X$ is locally free, and thus it suffices to verify the regularity of $f^*\omega$ at all codimension one points of $f^{-1}(U)$ (Hartogs' lemma).

Consider first a codimension one point $\eta \in f^{-1}(U)$ such that $\xi = f(\eta)$ is of codimension $\leq 1$ in $Y$.
Then there is an open neighbourhood $V \subseteq f^{-1}(U)$ of $\eta$ such that the restricted morphism $V \to Y$ does not contract any divisors.
Consequently, it defines a C-pair morphism $V \to (Y, \Delta_f)$.
It hence follows from Lemma~\ref{sym_diffs_pull_back_cpair_morphism} that $f^*\omega$ is regular on $V$ and in particular does not have a pole at $\eta$.

Now consider a codimension one point $\eta \in f^{-1}(U)$ such that $\xi = f(\eta)$ is of codimension $\geq 2$ in $Y$.
Let $\phi$ be a rational function on $Y$ such that $\phi \omega$ is regular at every codimension one point belonging to $Y \setminus U$ and such that $\phi$ has neither a pole nor a zero in a neighborhood of $\xi \in Y$ (such a function exists since $Y$ is quasi-projective).
Since $\phi$ has neither a pole nor a zero in a neighborhood around $\xi \in Y$, checking that $f^*\omega$ does not have a pole at $\eta$ is equivalent to checking that $f^*(\phi \omega)$ does not have a pole at $\eta$.
As we assumed that $\eta$ is contracted by $f$, it is also contracted by $p$.
So $p(\eta)$ has codimension at least two in $X'$.
The two varieties $X$ and $X'$ being birational, we may view $f^*(\phi \omega)$ as a rational pluridifferential form on $X'$.
If it is regular at $p(\eta) \in X'$ (viewed as a form on $X'$), it is also regular at $\eta \in X$ (viewed as a form on $X$).
Since $X'$ is smooth, poles of pluridifferential forms occur purely in codimension one (again Hartogs' lemma), so that if $f^*(\phi \omega)$ is not regular at $\eta \in X$, there is a codimension one point $\tilde{\eta} \in X'$, specializing to $p(\eta) \in X'$, such that $f^*(\phi \omega)$ is not regular at $\tilde{\eta}$ either.
The point $\tilde{\eta} \in X'$ is a codimension one point, so that, as $p$ is an isomorphism outside a codimension two subset on $X'$, the point $\tilde{\eta}$ may be viewed as a codimension one point on $X$.
This point is visibly not contracted along the map $p$, so it is also not contracted along the map $f$.
Hence, by the second paragraph of this proof, $f^*(\phi \omega)$ having a pole at $\tilde{\eta}$ means that $\tilde{\eta} \notin f^{-1}(U)$.
However, $\phi \omega$ was assumed to not have any poles at any codimension one point belonging to $Y \setminus U$, so that it cannot have a pole at $f(\tilde{\eta})$.
Consequently, $f^*(\phi \omega)$ cannot have a pole at $\tilde{\eta}$; this implies that $\tilde{\eta}$ cannot exist, hence that $f^*(\phi \omega)$ must have been regular at $\eta \in X$, and hence that $f^*\omega$ itself is regular at $\eta \in X$.
This is what we wanted to show.
\end{proof}

The reliance of the proof on the auxiliary function $\phi$ is a bit strange, but it seems difficult to make the argument work without it.
If this reliance turns out to be unnecessary, the quasi-projectivity assumption on $Y$ can be dropped.
However, this assumption is harmless for our intended application.
Observe also that if one only wants to get a pullback morphism $\HH^0(Y, S^n \Omega^1_{(Y, \Delta_f)}) \to \HH^0(X, \Sym^n \Omega^1_X)$, the yoga involving $\phi$ is unneeded.
Thus, the morphism on $\HH^0$ exists without any quasi-projectivity assumptions.

Theorem~\ref{mainthmb} now follows immediately by combining Lemma~\ref{sym_diffs_detect} with Lemma~\ref{sym_diffs_pull_back}.

\section{Applications} \label{sect:applications}

In this section, we use Theorem~\ref{mainthmb} to prove that Campana's conjecture that every variety admitting a dense entire curve is Campana-special is implied by a C-pair generalization of the weak Green--Griffiths--Lang conjecture.
We also prove the analogous result for varieties admitting a potentially dense set of integral points and the weak Bombieri--Lang conjecture (below, we will make precise which conjectures we need).\smallbreak

Before addressing these conjectures however, we make some remarks on rational maps of general type and Campana-special varieties.
Let $X$ be a smooth variety.
Given a dominant rational map $f \colon X \ratmap Y$ to a smooth variety $Y$, we can define its orbifold base divisor $\Delta_f$ by precomposing with a proper birational map $p \colon X' \to X$ such that $f \circ p$ is a morphism and setting $\Delta_f := \Delta_{f \circ p}$.
An easy calculation shows that this is in fact independent of the choice of $p$ and thus well-defined \cite[Lemma~2.3]{SeveriPaper}.
We would then like to say that the \emph{Kodaira dimension} of $f$ is the Iitaka dimension of the divisor $K_Y + \Delta_f$; however, this turns out to be unstable under birational modifications in $Y$ (see e.g.\ \cite[Example~1.11]{CampanaFourier}).
So instead, we follow Campana and make the following definition.

\begin{definition} \label{def:kodaira_dim}
Let $f \colon X \ratmap Y$ be a dominant rational map of smooth varieties.
The \emph{Kodaira dimension} of $f$ is the minimum of the Iitaka dimensions of $K_{Y'} + \Delta_{f'}$, taken over all smooth projective varieties $Y'$ birational to $Y$.
(Here, we wrote $f'$ for the dominant rational map $X \ratmap Y'$ which generically agrees with $f$.)
We say that $f$ is of \emph{general type} if its Kodaira dimension is $\dim Y$.
\end{definition}

The minimum in Definition~\ref{def:kodaira_dim} is achieved if $f$ is neat and the orbifold base divisor $\Delta_f$ has snc support \cite[Remark~2.11]{SeveriPaper}; if $X$ is proper, the condition on $\supp \Delta_f$ can be dropped.
The Kodaira dimension of a map $f$ has the interpretation of measuring the number of top degree pluridifferential forms on $Y$, perhaps with poles, whose pullback to $X$ is regular with at most logarithmic poles at infinity; see \cite[Proposition~2.9]{SeveriPaper} for a precise statement.
We can now make the following definition.

\begin{definition} \label{def:special}
A smooth variety $X$ is \emph{Campana-special} if it does not admit a rational map of general type whose image is positive-dimensional.
\end{definition}

Definition~\ref{def:special} coincides with the definition given in the introduction since every morphism can be birationally modified to a neat morphism; see the proof of Lemma~\ref{lemma:folklore} below.\smallbreak

Coming back to C-pairs, we say that a smooth C-pair $(X, \Delta)$ with $X$ proper is of \emph{general type} if the $\mathbb{Q}$-divisor $K_X + \Delta$ is big.
Thus, the definition of a Campana-special variety strongly suggests that a non-Campana-special variety admits, perhaps up to birational modification, a dominant morphism to a smooth C-pair of general type.
This is indeed true, but it is not a formal consequence of the definitions, as the examples in Section~\ref{sect:examples} show.
Nonetheless, this has been claimed many times in the literature, to the point that one can almost consider it to be ``folklore knowledge''.
The source of this claim is perhaps simply an overly optimistic reading of the results in \cite{CampanaFourier} and \cite{CampanaJussieu}, particularly \cite[Proposition~4.10]{CampanaJussieu}.
Hence we explicitly stress that the result which we now state is not a special case of \cite[Proposition~4.10]{CampanaJussieu}.

\begin{lemma} \label{lemma:folklore}
Let $X$ be a smooth variety which is not Campana-special.
Then there are a proper birational morphism $\widetilde{X} \to X$ with $\widetilde{X}$ smooth and a smooth C-pair $(Y, \Delta)$ of general type with $Y$ projective such that there is a dominant C-pair morphism $\widetilde{X} \to (Y, \Delta)$.
\end{lemma}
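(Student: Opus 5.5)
Since $X$ is not Campana-special, Definition~\ref{def:special} provides a dominant rational map $f \colon X \ratmap Y$ of general type with $\dim Y > 0$. The plan is to modify $Y$ and $X$ birationally until three things hold at once: $f$ becomes a neat morphism, $\Delta_f$ has snc support, and $K_Y + \Delta_f$ realizes the Kodaira dimension of $f$. Theorem~\ref{mainthmb} then turns $f$ into a C-pair morphism onto $(Y, \Delta_f)$, which will be of general type.

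\textbf{Step 1: resolve the target and the rational map.} First replace $Y$ by a smooth projective model. Let $Z \subseteq Y$ be the union of the finitely many prime divisors that can carry nonzero coefficients in $\Delta_f$, together with the divisors over which $f$ fails to be smooth. Take an embedded resolution of $(Y, Z)$, so that $Z$ (and later all of its strict transforms and exceptional divisors) is snc. Next resolve the indeterminacy of $X \ratmap Y$ by a proper birational morphism $X_1 \to X$ with $X_1$ smooth, so that $f_1 \colon X_1 \to Y$ is a morphism. By \cite[Lemma~2.3]{SeveriPaper}, $\Delta_f$ is unchanged by this step.

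\textbf{Step 2: make $f_1$ neat.} This is the main obstacle; it is the standard flattening argument. Apply Raynaud--Gruson flattening to $f_1$: there is a blowup $Y' \to Y$ such that the main component of $X_1 \times_Y Y'$ is flat over $Y'$. Then resolve $Y'$ again, keeping the relevant divisor snc, and let $\widetilde{X}$ be a resolution of the main component of the fibre product dominating $X_1$. Any divisor of $\widetilde{X}$ contracted by $\widetilde{X} \to Y'$ does not come from the flat model, so it is exceptional for $\widetilde{X} \to X_1$ or for $\widetilde{X} \to X$. Hence $\widetilde{f} \colon \widetilde{X} \to Y'$ is neat with respect to $\widetilde{X} \to X$. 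The delicate points are two:
\begin{itemize}[label=$\cdot$]
\item $X$ itself is smooth, so $X$ serves as the smooth model required in the definition of neatness.
\item After resolving $Y'$, flatness holds only over the locus where $Y'$ was not modified. I would handle this by resolving $Y'$ first and then flattening over the smooth $Y'$ (flattening by blowups of smooth centres, performed after the resolution), then making the final resolution snc by further blowups of $Y'$, which preserves flatness after base change.
\end{itemize}
Finally, ensure $\supp \Delta_{\widetilde f}$ is snc. It is contained in the total transform of $Z$ together with the exceptional locus of $Y' \to Y$, so one further log resolution of this divisor, with the base change and resolution repeated, keeps neatness by the same argument.

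\textbf{Step 3: conclude.} Since $\widetilde{f}$ is neat and $\Delta_{\widetilde f}$ has snc support, Theorem~\ref{mainthmb} shows that $\widetilde{f} \colon \widetilde{X} \to (Y', \Delta_{\widetilde f})$ is a C-pair morphism. It is dominant because $f$ is. By \cite[Remark~2.11]{SeveriPaper}, the Iitaka dimension of $K_{Y'} + \Delta_{\widetilde f}$ equals the Kodaira dimension of $f$, which is $\dim Y'$ since $f$ is of general type. So $K_{Y'} + \Delta_{\widetilde f}$ is big and $(Y', \Delta_{\widetilde f})$ is a smooth C-pair of general type with $Y'$ projective. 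Setting $\Delta := \Delta_{\widetilde f}$ and $\widetilde{X} \to X$ the composite proper birational morphism gives the lemma.
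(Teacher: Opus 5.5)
Your proposal is correct and follows essentially the same route as the paper: resolve the indeterminacy, flatten with Raynaud--Gruson, blow up the base further so that it is smooth with snc boundary, resolve the flat main component to get a neat morphism, and conclude with Theorem~\ref{mainthmb} and the general-type hypothesis. The one flaw is the worry in Step~2 that flatness only survives over the unmodified locus of $Y'$, which is unfounded. Flatness is stable under any further blowup of $Y'$ by base change, and the base-changed family stays integral because it is flat with integral generic fibre. This is exactly what the paper uses, and what your own final fix relies on, so the detour through flattening after resolving is unnecessary.
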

\begin{proof}
Since $X$ is not Campana-special, it admits a fibration of general type $X \ratmap Y$ with $Y$ a smooth proper variety (e.g.\ the core of $X$).
Choose a proper birational morphism $X' \to X$ with $X'$ smooth such that $f \colon X' \to Y$ is a morphism.
By Raynaud--Gruson flattening \cite[Theorem~5.2.2]{RaynaudGruson}, there is a proper birational morphism $Y' \to Y$ such that if $X''$ denotes the main component of $X' \times_Y Y'$, the induced morphism $X'' \to Y'$ is flat.
Blowing up $Y'$ further, we may additionally assume that $Y'$ is smooth projective and that the total transform of $\supp \Delta_f$ is an snc divisor on $Y'$.
Let $\widetilde{X}$ be a resolution of singularities of $X''$.
We claim that $g \colon \widetilde{X} \to Y'$ is neat.
Indeed, every divisor contracted by $g$ must already be contracted by $\widetilde{X} \to X''$ since $X'' \to Y'$ is flat.
In particular, every divisor contracted by $g$ is also contracted by $\widetilde{X} \to X'$, proving the neatness of $g$.
Moreover, as $g \colon \widetilde{X} \to Y'$ is a birational model of the morphism $f \colon X' \to Y$, it is easy to see that the support of $\Delta_g$ is contained in the total transform of the support of $\Delta_f$.
In particular, $\Delta_g$ is supported on an snc divisor.
Thus, by Theorem~\ref{mainthmb}, the morphism $g$ is a C-pair morphism $\widetilde{X} \to (Y', \Delta_g)$.
As $X \ratmap Y$ is a fibration of general type, the pair $(Y', \Delta_g)$ is of general type, so we are done. 
\end{proof}

Using Lemma~\ref{lemma:folklore}, the proof of Theorems~\ref{mainthmc} and \ref{mainthmd} is now standard.
But let us be precise about which conjectures we exactly need.

\subsection{Dense entire curves}

We start with the proof of Theorem~\ref{mainthmc}, concerning dense entire curves.
First observe that, over the complex numbers, the definition of a C-pair morphism given in the introduction adapts verbatim to give a notion of morphisms from a complex manifold into a C-pair.
Thus, we can make the following definition.

\begin{definition}
Let $(X, \Delta)$ be a C-pair over $\mathbb{C}$.
An \emph{entire curve} is a holomorphic C-pair map $\mathbb{C} \to (X, \Delta)^{\an}$.
A \emph{dense entire curve} is an entire curve whose image is Zariski-dense in $X$.
\end{definition}

The Green--Griffiths--Lang conjecture asserts that given a smooth projective variety $X$ of general type over the complex numbers, there is a proper Zariski-closed subset $Z \subsetneq X$ such that every nonconstant entire curve factors over $Z$.
As a consequence, it predicts that a positive-dimensional such $X$ does not admit a dense entire curve; this specific consequence is however weaker than the ``full'' Green--Griffiths--Lang conjecture, so that we call it the \emph{weak Green--Griffiths--Lang conjecture}. 
The weak Green--Griffiths--Lang conjecture for C-pairs now reads as follows; it is a special case of \cite[Conjecture~13.17]{CampanaJussieu}. 

\begin{conjecture}[Weak Green--Griffiths--Lang conjecture for C-pairs]
Let $(X, \Delta)$ be a smooth C-pair of general type over the complex numbers with $X$ projective and positive-dimensional. 
Then $(X, \Delta)$ does not admit a dense entire curve.
\end{conjecture}

A smooth variety $X$ is said to be of \emph{log-general type} if for one (then every) compactification $X \subseteq \overline{X}$ whose boundary $D := \overline{X} \setminus X$ is an snc divisor, the divisor $K_{\overline{X}} + D$ is big.
In this case, the pair $(\overline{X}, D)$ is a C-pair of general type, so that the weak Green--Griffiths--Lang conjecture predicts that $(\overline{X}, D)$ does not admit a dense entire curve.
Equivalently, $X$ does not admit a dense entire curve.

\begin{proof}[Proof of Theorem~\ref{mainthmc}]
A dense entire curve on $X$ lifts to a dense entire curve on any resolution of $X$.
Hence we may assume that $X$ is smooth.
We now proceed to prove the result by contradiction, so assume that $X$ is not Campana-special.
By assumption, there is a holomorphic map $\mathbb{C} \to X^{\an}$ with Zariski-dense image.
If $X$ is of general type, this contradicts the weak Green--Griffiths--Lang conjecture for the variety $X$ and we are done.
So we may assume that $X$ is not of general type.
By Lemma~\ref{lemma:folklore}, there is a proper birational morphism $\widetilde{X} \to X$ with $\widetilde{X}$ smooth and a smooth C-pair $(Y, \Delta)$ of general type with $Y$ projective such that there is a dominant C-pair morphism $f \colon \widetilde{X} \to (Y, \Delta)$.
If $\dim Y = \dim \widetilde{X}$, the Riemann--Hurwitz formula implies that $f^*(K_Y + \Delta) \leq K_{\widetilde{X}}$, so that the Iitaka dimension of $K_Y + \Delta$ is at most that of $K_{\widetilde{X}}$, which is the Kodaira dimension of $\widetilde{X}$.
Thus, since $\widetilde{X}$ is not of general type, we must have $\dim Y < \dim \widetilde{X}$.
The entire curve $\mathbb{C} \to X^{\an}$ lifts along the proper birational morphism $\widetilde{X} \to X$ to an entire curve $\mathbb{C} \to \widetilde{X}^{\an}$, whose image is still Zariski-dense.
Composing, we obtain a dense entire curve in $(Y, \Delta)^{\an}$, contradicting the weak Green--Griffiths--Lang conjecture for the C-pair $(Y, \Delta)$.
\end{proof}

We note that if one restricts to proper $X$ of dimension $\leq n$ in the statement of Theorem~\ref{mainthmc}, one gets away with only assuming the weak Green--Griffiths--Lang conjecture for smooth projective varieties $X$ of dimension $\leq n$.
However, one still needs to assume weak Green--Griffiths--Lang for all C-pairs of dimension $\leq n-1$, which then also includes assuming it for all quasi-projective varieties of dimension $\leq n-1$.
(It might seem that one can get away with only assuming it for C-pairs with \emph{finite} multiplicities, but this case is easily seen to imply the conjecture for all C-pairs; see \cite[Section~6.1]{LafonThreefoldPaper} for details.)

\subsection{Density of integral points}

Our strategy for proving the number-theoretic Theorem~\ref{mainthmd} is similar to that for proving the complex-analytic Theorem~\ref{mainthmc}.
However, to state the conjectures precisely, we need to discuss integral points on varieties and C-pairs.
If $X$ is a variety over $\overline{\mathbb{Q}}$ and $R \subseteq \overline{\mathbb{Q}}$ is a subring, a \emph{model} for $X$ over $R$ is an $R$-scheme $\mathcal{X}$ equipped with an isomorphism $\mathcal{X} \otimes_R \overline{\mathbb{Q}} \cong X$.

\begin{definition}
Let $X$ be a variety over a subfield of $\overline{\mathbb{Q}}$.
We say that $X$ \emph{satisfies potential density of integral points} if there exist a number field $K$, a finite set $S$ of finite places of $K$, and a finite type separated model $\mathcal{X}$ for $X$ over $\mathcal{O}_{K,S}$ such that $\mathcal{X}(\mathcal{O}_{K,S}) \subseteq X(\overline{\mathbb{Q}})$ is Zariski-dense in $X$.
\end{definition}

If $X$ is proper, the model $\mathcal{X}$ can be chosen to be proper.
In that case, we have $\mathcal{X}(\mathcal{O}_{K,S}) = \mathcal{X}(K)$, so that the potential density of integral points is equivalent to that of the rational points on $X$.
The definition of potential density for C-pairs is slightly more involved.

\begin{definition}
Let $(X, \Delta)$ be a C-pair over a subfield of $\overline{\mathbb{Q}}$, let $K$ be a number field, and let $S$ be a finite set of finite places of $K$.
Let $\mathcal{X}$ be a model for $X$ over $\mathcal{O}_{K,S}$ and let $\Delta_{\mathcal{X}}$ be the closure of $\Delta$ in $\mathcal{X}$.
Write $\Delta_{\mathcal{X}} = \sum_i (1-\frac{1}{m_i}) D_i$.
Then a morphism $f \colon \Spec \mathcal{O}_{K,S} \to \mathcal{X}$ is an \emph{integral point} of $(\mathcal{X}, \Delta_{\mathcal{X}})$ if it is disjoint from $\floor{\Delta_{\mathcal{X}}}$ and for every $i$, either $f$ factors over $D_i$ or every coefficient of $f^*D_i$ is at least $m_i$.
We say that $(X, \Delta)$ satisfies \emph{potential density of integral points} if for some choice of $K$, $S$ and $\mathcal{X}$ as above, the integral points of $(\mathcal{X}, \Delta_{\mathcal{X}})$ are Zariski-dense in $X$.
\end{definition}

The reason for this definition is that it is precisely what a ``C-pair morphism'' $\Spec \mathcal{O}_{K,S} \to (\mathcal{X}, \Delta_{\mathcal{X}})$ should be (except that we have not defined C-pair morphisms in this generality).
The same calculation showing that C-pair morphisms can be composed then easily yields the following lemma.

\begin{lemma} \label{lemma:descend_pot_density}
Let $X$ be a smooth variety, let $(Y, \Delta)$ be a C-pair, and let $f \colon X \to (Y, \Delta)$ be a dominant C-pair morphism.
Suppose that $X$ satisfies potential density of integral points.
Then $(Y, \Delta)$ satisfies potential density of integral points.
\end{lemma}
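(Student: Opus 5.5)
The plan is to extend $f$ to a morphism of integral models over a large enough ring of $S$-integers, and then check that composing an integral point of $X$ with this model morphism gives an integral point of $(Y,\Delta)$. The check is the same local valuation computation that shows C-pair morphisms compose.

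\emph{Choosing models.} Suppose $K$, $S$ and a model $\mathcal{X}$ of $X$ over $\mathcal{O}_{K,S}$ are given, with $\mathcal{X}(\mathcal{O}_{K,S})$ dense in $X$. After enlarging $K$, everything is defined over $K$. Spreading out, we choose a finite type separated model $\mathcal{Y}$ of $Y$ over $\mathcal{O}_{K,S'}$, where $S'\supseteq S$ is finite and large enough that the following hold.
\begin{enumerate}[label=(\roman*)]
\item $f$ extends to a morphism $\mathcal{F}\colon \mathcal{X}_{\mathcal{O}_{K,S'}}\to\mathcal{Y}$.
\item Each closure $\mathcal{D}_i$ of a component $D_i$ of $\Delta$ is principal locally on a finite open cover of $\mathcal{Y}$, say cut out by $y_i$.
\item On the corresponding cover of $\mathcal{X}$, we have a factorization $\mathcal{F}^*y_i=\prod_j g_j^{a_{ij}}\cdot u$. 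Here the $g_j$ cut out the prime components $E_j$ of $f^*D_i$ together with their closures, $u$ is a unit, and each $a_{ij}\ge m_i$ by the C-pair condition on $f$.
\item $\mathcal{F}^{-1}(\floor{\Delta_{\mathcal{Y}}})$ is contained in the closure of $f^{-1}(\floor{\Delta})=\emptyset$, so it is empty.
\end{enumerate}
Integral points over $\mathcal{O}_{K,S}$ are still $S'$-integral points, so density is preserved. Since the points are dense and $f$ is dominant, their images under $\mathcal{F}$ are dense in $Y$.

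\emph{Checking integrality.} Let $P\colon\Spec\mathcal{O}_{K,S'}\to\mathcal{X}$ be an integral point, and set $Q=\mathcal{F}\circ P$.
\begin{itemize}
\item $Q$ avoids $\floor{\Delta_{\mathcal{Y}}}$ by (iv).
\item Fix $i$, and suppose $Q$ does not factor through $\mathcal{D}_i$.
  \begin{itemize}
  \item If $P$ factors through some $\overline{E_j}$, then its generic point lies in $f^{-1}(D_i)$, so $Q$ factors through $\mathcal{D}_i$ after all. This case is excluded.
  \item Otherwise, at each finite place $v$ we have $v(Q^*y_i)=\sum_j a_{ij}\,v(P^*g_j)$. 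Every term has $a_{ij}\ge m_i$, so the valuation is either $0$ or at least $m_i$.
  \end{itemize}
\end{itemize}
So $Q$ is an integral point of $(\mathcal{Y},\Delta_{\mathcal{Y}})$.

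\emph{Main obstacle.} The delicate part is step (iii). The unit $u$ has to remain a unit after spreading out. Also, components of $f^*\mathcal{D}_i$ that are vertical, i.e.\ supported in fibres over $\Spec\mathcal{O}_{K,S'}$, must not appear. Both are handled by inverting finitely many more primes, and this is exactly what the enlargement from $S$ to $S'$ is for.

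One more point to verify is that $\Delta_{\mathcal{Y}}$, defined as the closure of $\Delta$, agrees with the divisor used above. It does: over $S'$-integers, after inverting the extra primes, the horizontal closure is exactly cut out by the $y_i$.
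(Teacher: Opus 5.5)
Your proposal is correct and is essentially the paper's argument. The paper only remarks that an integral point is exactly a ``C-pair morphism'' $\Spec \mathcal{O}_{K,S} \to (\mathcal{Y}, \Delta_{\mathcal{Y}})$, so that the valuation computation showing C-pair morphisms compose yields the lemma. You supply the spreading-out details the paper leaves implicit: extending $f$ over a larger $S'$, keeping the local factorization $\mathcal{F}^*y_i = u\prod_j g_j^{a_{ij}}$ with $u$ a unit, and making $\mathcal{F}^{-1}(\floor{\Delta_{\mathcal{Y}}})$ empty. One wording point in (iv): the containment in the closure of the generic fibre only holds after inverting the finitely many primes over which $\mathcal{F}^{-1}(\floor{\Delta_{\mathcal{Y}}})$ has nonempty fibres, which your enlargement of $S'$ already permits.
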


The Bombieri--Lang conjecture asserts that given a smooth projective variety $X$ of general type over a number field $K$, there is a proper closed subset $Z \subsetneq X$ such that for all number fields $L$ extending $K$, all but finitely many $L$-points of $X$ lie in $Z$.
As a consequence, it predicts that a positive-dimensional such $X$ does not have a potentially dense set of rational points; this specific consequence is however weaker than the ``full'' Bombieri--Lang conjecture, so that we call it the \emph{weak Bombieri--Lang conjecture}. 
The weak Bombieri--Lang conjecture for C-pairs now reads as follows; it is a special case of \cite[Conjecture~13.23]{CampanaJussieu}.

\begin{conjecture}[Weak Bombieri--Lang conjecture for C-pairs]
Let $(X, \Delta)$ be a smooth C-pair of general type over $\overline{\mathbb{Q}}$ with $X$ projective and positive-dimensional. 
Then $(X, \Delta)$ does not satisfy potential density of integral points.
\end{conjecture}

Similarly to the case of the weak Green--Griffiths--Lang conjecture, the Bombieri--Lang conjecture for C-pairs implies in particular that given a smooth variety $X$ of log-general type, it does not satisfy potential density of integral points.

\begin{proof}[Proof of Theorem~\ref{mainthmd}]
Potential density of integral points is invariant under proper birational morphisms, so we may assume that $X$ is smooth.
We now proceed to prove the result by contradiction, so assume that $X$ is not Campana-special.
By assumption, $X$ has a potentially dense set of integral points.
If $X$ is of general type, this directly contradicts the weak Bombieri--Lang conjecture for the variety $X$ and we are done.
So we may assume that $X$ is not of general type.
By Lemma~\ref{lemma:folklore}, there is a proper birational morphism $\widetilde{X} \to X$ with $\widetilde{X}$ smooth and a smooth C-pair $(Y, \Delta)$ of general type with $Y$ projective such that there is a dominant C-pair morphism $f \colon \widetilde{X} \to (Y, \Delta)$.
If $\dim Y = \dim \widetilde{X}$, the Riemann--Hurwitz formula implies that $f^*(K_Y + \Delta) \leq K_{\widetilde{X}}$, so that the Iitaka dimension of $K_Y + \Delta$ is at most that of $K_{\widetilde{X}}$, which is the Kodaira dimension of $\widetilde{X}$.
Thus, since $\widetilde{X}$ is not of general type, we must have $\dim Y < \dim \widetilde{X}$.
Again using the proper-birational invariance of potential density of integral points, we see that $\widetilde{X}$ has a potentially dense set of integral points.
Using Lemma~\ref{lemma:descend_pot_density}, we see that $(Y, \Delta)$ satisfies potential density of integral points, contradicting the weak Bombieri--Lang conjecture for the C-pair $(Y, \Delta)$.
\end{proof}

As with Theorem~\ref{mainthmc}, if one restricts to proper $X$ of dimension $\leq n$ in the statement of Theorem~\ref{mainthmd}, one gets away with only assuming the weak Bombieri--Lang conjecture for smooth projective varieties $X$ of dimension $\leq n$.
However, one still needs to assume weak Bombieri--Lang for all C-pairs of dimension $\leq n-1$, which then also includes assuming it for all quasi-projective varieties of dimension $\leq n-1$.

\bibliographystyle{alpha}
\bibliography{orbifoldbase}

\end{document}